\newlength{\hatchspread}
\newlength{\hatchthickness}
\newlength{\hatchshift}
\newcommand{\hatchcolor}{}
\tikzset{hatchspread/.code={\setlength{\hatchspread}{#1}},
         hatchthickness/.code={\setlength{\hatchthickness}{#1}},
         hatchshift/.code={\setlength{\hatchshift}{#1}},% must be >= 0
         hatchcolor/.code={\renewcommand{\hatchcolor}{#1}}}
\tikzset{hatchspread=3pt,
         hatchthickness=0.4pt,
         hatchshift=0pt,% must be >= 0
         hatchcolor=black}
\newcommand*{\centerfloat}{%
  \parindent \z@
  \leftskip \z@ \@plus 1fil \@minus \textwidth
  \rightskip\leftskip
  \parfillskip \z@skip}
\NewDocumentCommand{\makeabbrev}{mmm}
 {
  \yoruk_makeabbrev:nnn { #1 } { #2 } { #3 }
 }
\makeabbrev{\textbf}{tbf#1}{a,b,c,d,e,f,g,h,i,j,k,l,m,n,o,p,q,r,s,t,u,v,w,x,y,z,A,B,C,D,E,F,G,H,I,J,K,L,M,N,O,P,Q,R,S,T,U,V,W,X,Y,Z}
\makeabbrev{\textbf}{bf#1}{a,b,c,d,e,f,g,h,i,j,k,l,m,n,o,p,q,r,s,t,u,v,w,x,y,z,A,B,C,D,E,F,G,H,I,J,K,L,M,N,O,P,Q,R,S,T,U,V,W,X,Y,Z}
\makeabbrev{\textsf}{tsf#1}{a,b,c,d,e,f,g,h,i,j,k,l,m,n,o,p,q,r,s,t,u,v,w,x,y,z,A,B,C,D,E,F,G,H,I,J,K,L,M,N,O,P,Q,R,S,T,U,V,W,X,Y,Z}
\makeabbrev{\mathsf}{mss#1}{a,b,c,d,e,f,g,h,i,j,k,l,m,n,o,p,q,r,s,t,u,v,w,x,y,z,A,B,C,D,E,F,G,H,I,J,K,L,M,N,O,P,Q,R,S,T,U,V,W,X,Y,Z}
\makeabbrev{\mathfrak}{mf#1}{a,b,c,d,e,f,g,h,i,j,k,l,m,n,o,p,q,r,s,t,u,v,w,x,y,z,A,B,C,D,E,F,G,H,I,J,K,L,M,N,O,P,Q,R,S,T,U,V,W,X,Y,Z}
\makeabbrev{\mathrm}{mrm#1}{a,b,c,d,e,f,g,h,i,j,k,l,m,n,o,p,q,r,s,t,u,v,w,x,y,z,A,B,C,D,E,F,G,H,I,J,K,L,M,N,O,P,Q,R,S,T,U,V,W,X,Y,Z}
\makeabbrev{\mathbf}{mbf#1}{a,b,c,d,e,f,g,h,i,j,k,l,m,n,o,p,q,r,s,t,u,v,w,x,y,z,A,B,C,D,E,F,G,H,I,J,K,L,M,N,O,P,Q,R,S,T,U,V,W,X,Y,Z}
\makeabbrev{\mathcal}{mc#1}{A,B,C,D,E,F,G,H,I,J,K,L,M,N,O,P,Q,R,S,T,U,V,W,X,Y,Z}
\makeabbrev{\mathbb}{mbb#1}{A,B,C,D,E,F,G,H,I,J,K,L,M,N,O,P,Q,R,S,T,U,V,W,X,Y,Z}
\makeabbrev{\mathscr}{ms#1}{A,B,C,D,E,F,G,H,I,J,K,L,M,N,O,P,Q,R,S,T,U,V,W,X,Y,Z}
\makeabbrev{\mathrm}{#1}{
%Algebraic Ops
Id,id,ran,rk,diag,stab,ann,conv,pr,ev,tr,End,Hom,sgn,im,op,can,fin,ext,red,tot,
%
%Analytic Ops
rot,usc,lsc,Lip,LocLip,lip,bSymLip,osc,AC,loc,uloc,spec,coz,z,ul,
%
%Measure Theory
supp,Opt,Adm,Cpl,Geo,GeoSel,GeoOpt,GeoAdm,GeoCpl,reg,
%
%Topology/Geometry
bd,co,Ric,Exp,dExp,dist,seg,Seg,cut,fcut,Cut,SDiff,Iso,Isom,diam,cl,Homeo,Diff,Der,vol,dvol,inj,relint, Graph, sub,codim,
%
%Probability
var,law,Var,Poi,Gam,pa,so,iso,fs,inv,pqi,mix,
TestF,
%Miscellaneous
}
\makeabbrev{\mathsf}{#1}{DP,CD,BE,MCP,Ent,wMTW,MTW,RCD,ncRCD,QCD,EVI,Irr,IH,SC,wFe,VA,UP,Curv,Alex,CAT}
\newcommand{\T}{\tau} %TOPOLOGY
\renewcommand{\div}{\mathrm{div}}
\newcommand{\Ch}[1][]{\mathsf{Ch}_{#1}}
\newcommand{\e}{\epsilon}
\let\epsilon\varepsilon
\let\temp\phi
\let\phi\varphi
\let\varphi\temp
\newcommand{\diff}{\mathop{}\!\mathrm{d}}
\DeclareSymbolFont{symbolsC}{U}{pxsyc}{m}{n}
\DeclareMathSymbol{\medcirc}{\mathbin}{symbolsC}{7}
\DeclareSymbolFont{symbolsZ}{OMS}{pxsy}{m}{n}
\DeclareMathOperator*{\esssup}{ess\, sup}
\DeclareMathOperator*{\essinf}{ess\, inf}
\newcommand{\N}{{\mathbb N}}
\newcommand{\R}{{\mathbb R}}
\tikzset{cross/.style={cross out, draw=black, minimum size=2*(#1-\pgflinewidth), inner sep=0pt, outer sep=0pt},
%default radius will be 1pt. 
cross/.default={4pt}}
\newcommand{\comma}{\,\,\mathrm{,}\;\,}
\newcommand{\fstop}{\,\,\mathrm{.}}
\newcommand{\E}{\mathcal E}
\renewcommand{\1}{\mathbf 1}
\renewcommand{\d}{{\sf d}}
\newcommand{\m}{{\sf m}}
\newcommand{\M}{{\sf M}}
\newcommand{\X}{{\sf X}}
\newcommand{\Dif}{{\rm D}}
\newcommand{\dif}{{\mathbf d}}
\renewcommand{\T}{{\sf T}}
\renewcommand{\P}{{\sf P}}
\renewcommand{\L}{{\mathbb L}}
\numberwithin{equation}{section}
\theoremstyle{plain}
\newtheorem{thm}{Theorem}[section]
\newtheorem*{thm*}{Theorem}
\newtheorem*{mthm*}{Main Theorem}
\newtheorem{prop}[thm]{Proposition}%[section]
\newtheorem{lem}[thm]{Lemma}%[section]
\newtheorem{cor}[thm]{Corollary}%[section]
\newtheorem*{cor*}{Corollary}
\theoremstyle{definition}
\newtheorem*{defs*}{Definition}%[section]
\theoremstyle{remark}
\newtheorem{rem}[thm]{\bf Remark}%[section]
\renewcommand{\paragraph}[1]{\medskip\emph{#1}.\quad}
\date{\today}
\begin{document}
\title[Integral Varadhan formula for non-linear heat flow]{Integral Varadhan formula for non-linear heat flow}

\author[S.~Ohta]{Shin-ichi Ohta}
\address{Department of Mathematics, Osaka University, Osaka 560-0043, Japan, and RIKEN Center for Advanced Intelligence Project (AIP), 1-4-1 Nihonbashi, Tokyo 103-0027, Japan}
\email{s.ohta@math.sci.osaka-u.ac.jp}

\author[K.~Suzuki]{Kohei Suzuki}
\address{Department of Mathematical Science, Durham University, DH13LE, Durham, United Kingdom/Theoretical Sciences Visiting Program, Okinawa Institute of Science and Technology Graduate University, Onna, 904-0495, Japan}
\email{kohei.suzuki@durham.ac.uk}
\thanks{The first author is supported in part by JSPS Grant-in-Aid for Scientific Research (KAKENHI) 19H01786, 22H04942. The second author gratefully acknowledges funding by the Alexander von Humboldt Stiftung as well as the Theoretical Sciences Visiting Program (TSVP) at the Okinawa
Institute of Science and Technology Graduate University in Japan.}

%\keywords{\vspace{2mm}Ergodicity, tail-triviality, optimal transport, Sobolev-to-Lipschitz, rigidity}
%
%\subjclass[2010]{37A30, 31C25, 30L99, 70F45, 60G55}

\maketitle

\begin{abstract}
We prove the integral Varadhan short-time formula for non-linear heat flow on measured Finsler manifolds. 
To the best of the authors' knowledge, this is the first result establishing a Varadhan-type formula 
for non-linear semigroups. 
We do not assume the reversibility of the metric, thus the distance function can be asymmetric. 
In this generality, we reveal that the probabilistic interpretation is well-suited for our formula; 
the probability that a particle starting from a set $A$ can be found in another set $B$ 
describes the distance from $A$ to $B$. 
One side of the estimates (the upper bound of the probability) is also established  
in the nonsmooth setting of infinitesimally strictly convex metric measure spaces. 
%satisfying the local Sobolev-to-Lipschitz property. 
\end{abstract}

\section{Introduction}%%%%%
%%%%%%%%%

\paragraph{Aim}%%%%%
The main aim of this article is to draw more attention to (geometric) analysis of \emph{non-linear heat flow}. 
The linear theory had been developed intensively and extensively in connection with two powerful theories: 
\emph{Dirichlet forms} related to probability theory and the \emph{$\Gamma$-calculus} 
\`a la Bakry--\'Emery related to differential geometry as well as geometric analysis. 
A non-linear analogue to the $\Gamma$-calculus has been investigated on Finsler manifolds 
(of Ricci curvature bounded below in an appropriate way) 
by the first author and Sturm \cite{Oht17b,Oht17a,Oht21,Oht22,OS14}. 
Then it is natural to expect a more general theory of non-linear heat semigroups 
as a non-linear counterpart to the theory of Dirichlet forms, however, 
there is surprisingly no result in such a direction. 
In this article, to motivate further studies of non-linear heat semigroups, 
we establish the integral Varadhan short-time formula for non-linear heat flow on Finsler manifolds. 
%\purple{{\bf delete} \footnotesize A large part of our discussion can be generalised to metric measure spaces under mild assumptions.} 

\paragraph{Background}%%%%%
On a complete Riemannian manifold $(\M,g)$ with the Riemannian distance $\d$, 
let $p_t(x,y)$ be the heat kernel density, i.e., the minimal fundamental solution to the heat equation 
$\partial_t u= \frac{1}{2}\Delta u$. 
From the probabilistic viewpoint, $p_t(x,y)$ is the density function of the transition probability 
of the Brownian motion in $\M$. 
The \emph{Varadhan short-time formula}~\cite{Var67} states that 
the short-time behaviour of $p_t(x,y)$ is governed by $\d(x,y)$ in the following way: 
\begin{equation}\label{VAR}
\lim_{t \downarrow 0} t\log p_t(x,y) = -\frac{1}{2}\d(x,y)^2 \fstop
\end{equation}
The formula \eqref{VAR}, linking geometry, analysis, and probability, 
has been studied in various settings including complete connected Riemannian manifolds~\cite{Var67}, 
Lipschitz manifolds~\cite{Nor97}, degenerate diffusions on Euclidean spaces~\cite{CarKusStr87}, 
sub-Riemannian manifolds~\cite{Lea87a, Lea87b, Ben88, BenLea91, BaiNor18}, 
and metric measure spaces satisfying the quasi Riemannian curvature-dimension condition~\cite{LzDSSuz22}.
%sub-Riemannian manifolds ??, fractals ??, metric measure spaces with curvature bound ??. 

For spaces not admitting the heat kernel density~$p_t(x,y)$, 
the formula \eqref{VAR} has been generalised as 
\begin{equation} \label{e:FV}
\lim_{t \downarrow 0}t\log \P_t(A, B) = -\frac{1}{2}\bar\d_\m(A, B)^2 
\end{equation}
for $A,B \subset \M$ with $0<\m(A), \m(B)<\infty$, where $\m$ is the reference measure on $\M$,
\[ \P_t(A, B):= \int_A \T_t\1_B \diff \m \]
with the $L^2$-heat semigroup~$(\T_t)_{t \ge 0}$, 
and $\bar\d_\m(A, B)$ is a suitably defined distance-like function. 
In the case of \emph{linear} heat semigroups, \eqref{e:FV} has been established 
in a general setting of local Dirichlet spaces \cite{HinRam03, AriHin05, HinMat18}, 
where $\bar\mssd_\m(A, B)$ is induced by a local Dirichlet form. 
If a local Dirichlet space admits a distance function in the domain of the Dirichlet form 
in a compatible way in the sense of the Rademacher-type property and the Sobolev-to-Lipschitz property, 
the set function~$\bar\mssd_\m(A, B)$ is indeed identified with the distance between $A$ and $B$; 
see~\cite{LzDSSuz20} for details. 
We refer the readers to the following articles for particular spaces: 
the Wiener space and path/loop groups \cite{Fan94, FanZha99, AidKaw01, AidZha02, HinRam03}, 
the configuration space \cite{Zha01, LzDSSuz22a} and the Wasserstein space \cite{vReStu09}.
%\purple{{\bf Delete} RCD spaces \cite{LzDSSuz20}, and~\cite{GigTamTre22} for a large deviation type result.}

\paragraph{Main results}%%%%%
%By extending the arguments in \cite{HinRam03, AriHin05}, 
We shall generalise \eqref{e:FV} to \emph{non-linear} heat flow $(\T_t)_{t \ge 0}$ 
on a Finsler manifold $(\M,F)$ equipped with a measure $\m$. 
We do not assume the \emph{reversibility} of $F$ (i.e., $F(-v) \neq F(v)$ is allowed), 
thereby the distance function $\d$ can be asymmetric. 
In this case, $\bar\d_\m(A,B)$ is defined as 
\begin{equation}\label{eq:d_m}
\bar\d_\m(A, B) :=\sup_{f \in \L} \Bigl\{ \essinf_{x \in A}f(x)-\esssup_{y \in B}f(y) \Bigr\} 
\end{equation}
for measurable sets $A,B \subset \M$ with $0< \m(A),\m(B) <\infty$, where 
\begin{equation}\label{eq:L}
\L:=\{f \in H^1_{\loc}(\M) \cap L^\infty(\M): F^*(-\dif f) \le 1\ \text{a.e.} \} \fstop 
\end{equation}
We remark that $\bar\d_\m(A,B)<\infty$. 
The condition $F^*(-\dif f) \le 1$ roughly means that $-f$ is $1$-Lipschitz, 
and one can regard that $\bar\d_\m(A,B)$ represents the distance \emph{from $A$ to $B$}. 
We refer to Subsection~\ref{ssc:Finsler} for precise definitions and notations in Finsler geometry. 
We also set 
\[ \d(A, B):= \inf_{x \in A,\, y \in B} \d(x, y) \fstop \]
Due to Lemma~\ref{lm:CDD} proven later, for open sets $A, B \subset \M$, we have 
\[ \bar\mssd_{\mssm}(A, B)=\d(A, B)\fstop \]

\begin{thm}\label{t:m1}
Let $(\M,F)$ be a complete $C^{\infty}$-Finsler manifold 
equipped with a $C^{\infty}$-measure $\m$ on $\M$ with $\m(\M)<\infty$. 
Assume that the uniform convexity and smoothness constants are finite. 
Then, for any measurable sets $A,B \subset \M$ with $0< \m(A),\m(B) <\infty$, we have 
\begin{equation}\label{eq:dAB}
\lim_{t \downarrow 0} t\log\P_t(A,B) =-\frac{1}{2} \bar\d_\m(A, B)^2 \fstop 
\end{equation}
In particular, for any open sets $A,B \subset \M$, we have
\[
\lim_{t \downarrow 0} t\log\P_t(A,B) =-\frac{1}{2} \d(A,B)^2 \fstop 
\]
\end{thm}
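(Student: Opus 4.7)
My plan is to adapt the Ariyoshi--Hino strategy for the integrated Varadhan formula on local Dirichlet spaces to the non-linear Finsler setting, exploiting the Finsler calculus available for $u_t:=\T_t\1_B$ under the standing smoothness and uniform convexity/smoothness assumptions (Ohta--Sturm). The first identity in \eqref{eq:dAB} is proved by matching upper and lower bounds on $t\log\P_t(A,B)$; once this is known, the second identity follows by showing $\bar\d_\m(A,B)=\d(A,B)$ for open $A,B$ via the distance-to-$B$ function as a test element of $\L$.

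For the upper bound, fix $f\in\L$ and $\lambda\ge 0$. The key ingredient is a Davies-type exponential energy estimate
\[
\int_{\M} e^{2\lambda f}\, u_t^2\,\diff\m \;\le\; e^{\lambda^2 t}\int_{\M} e^{2\lambda f}\,\1_B\,\diff\m.
\]
This follows by differentiating the left-hand side in $t$, using $\partial_t u_t=\tfrac12\Delta u_t$ together with the Finsler integration-by-parts $\int\phi\,\Delta u\,\diff\m=-\int g^*_{\nabla u}(\dif\phi,\dif u)\,\diff\m$ applied to $\phi=e^{2\lambda f}u_t$, and then combining the resulting identity with the pointwise bound $-\dif f(\nabla u_t)\le F^*(-\dif f)\,F^*(\dif u_t)\le F^*(\dif u_t)$ (Legendre duality and $f\in\L$) via AM--GM; this yields the differential inequality $\tfrac{d}{dt}\!\int e^{2\lambda f}u_t^2\,\diff\m\le\lambda^2\!\int e^{2\lambda f}u_t^2\,\diff\m$, whence the estimate by Gronwall. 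Combining the Davies estimate with $\P_t(A,B)\le e^{-\lambda\essinf_A f}\m(A)^{1/2}\bigl(\int e^{2\lambda f}u_t^2\,\diff\m\bigr)^{1/2}$, optimising $\lambda t=\essinf_A f-\esssup_B f$, and taking the supremum over $f\in\L$ produces $\limsup_{t\downarrow 0}t\log\P_t(A,B)\le -\tfrac12\bar\d_\m(A,B)^2$.

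The lower bound is the principal obstacle: the non-linear setting lacks the heat-kernel representation, Girsanov-type exponential tilting, and linear duality on which the classical Hino--Ram\'irez proof rests. My intended approach is to exploit the gradient-flow characterisation of $\T_t$ for the Finsler Dirichlet energy on $L^2(\m)$: for each $f\in\L$, construct a non-linear sub-solution of the Finsler heat equation carrying the tilt $e^{\lambda f}$, combine it with Jensen's inequality against the tilted measure proportional to $e^{2\lambda f}\m$, and use the $L^2$-contractivity of $\T_t$ to produce the matching lower bound. Throughout, the asymmetry of $F$ must be respected: $\bar\d_\m(A,B)$ is the directed distance \emph{from $A$ to $B$}, so the sign in $F^*(-\dif f)\le 1$ and the roles of $A$ and $B$ cannot be swapped.

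Finally, for the second identity it suffices to establish $\bar\d_\m(A,B)=\d(A,B)$ for open $A,B$. The inequality $\bar\d_\m(A,B)\le\d(A,B)$ holds for arbitrary measurable sets: integrating $F^*(-\dif f)\le 1$ along an approximately minimising Finsler geodesic from $x$ to $y$ yields $f(x)-f(y)\le\d(x,y)$, so $\essinf_A f-\esssup_B f\le \d(A,B)$. For the reverse inequality (assuming $\d(A,B)<\infty$, otherwise truncate and pass to the limit), take $f(x):=\min\{\d(x,B),\d(A,B)\}$: it lies in $H^1_\loc\cap L^\infty$; the forward $1$-Lipschitz property of $x\mapsto\d(x,B)$ (from the triangle inequality) together with $\min$-stability gives $F^*(-\dif f)\le 1$ almost everywhere; and since $A,B$ are open and disjoint, $f\equiv\d(A,B)$ on $A$ and $f\equiv 0$ on $B$. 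Substituting $\bar\d_\m(A,B)=\d(A,B)$ into \eqref{eq:dAB} concludes the proof.
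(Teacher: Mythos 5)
Your upper bound is essentially the paper's Proposition~\ref{pr:U}: the Davies-type exponential estimate $\xi'(t)\le\alpha^2\xi(t)$ via $\partial_t u_t=\tfrac12\Delta u_t$, integration by parts, the duality $-\dif f(\nabla u_t)\le F^*(-\dif f)F(\nabla u_t)\le F(\nabla u_t)$, AM--GM, and Cauchy--Schwarz at the end. The paper has to spend a few extra lines on a cut-off $\chi_k$ to justify the integration by parts in $H^1_0(\M)$, but the idea is identical. Your treatment of the second identity also matches the paper (Lemmas~\ref{lm:DM}, \ref{lm:CDD} and Proposition~\ref{pr:StL}), up to the inessential over-claim that $\bar\d_\m(A,B)\le\d(A,B)$ ``holds for arbitrary measurable sets'' --- that fails in general (e.g.\ if $A$ consists of a ball union a far-away null set, $\d(A,B)$ can be strictly smaller than anything $\essinf_A f-\esssup_B f$ can see), but the claim is only used for open sets, where the local Sobolev-to-Lipschitz property supplies a continuous representative and the argument goes through.

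The genuine gap is the lower bound, which is the heart of the theorem. Your sketch --- ``construct a non-linear sub-solution carrying the tilt $e^{\lambda f}$, combine with Jensen against $e^{2\lambda f}\m$, and use $L^2$-contractivity'' --- is not a proof strategy that is known to work, and it certainly does not constitute one as written. The Davies exponential tilting machinery is intrinsically an \emph{upper}-bound device; Hino--Ram\'irez-type lower bounds require a quite different apparatus, and the difficulty is compounded here by the non-linearity. Concretely, the paper's proof of \eqref{LB} proceeds through four non-trivial stages that your proposal does not touch: (i) truncation functions $\varphi,\Phi,\Psi$ (Subsection~\ref{ssc:cut}) and the reduction of \eqref{LB} via Jensen's inequality to the key estimate~\eqref{eq:G}; (ii) uniform $H^1_0$-bounds on the time-averaged quantities $\bar\varphi_t^\delta\chi_k$, $\bar\Phi_t^\delta\chi_k$ (Proposition~\ref{p:UBA}), which is where the structural constant $C$ of the truncation and the regularised $u_t^\delta=-t\log((1-\delta)\T_t\1_B+\delta)$ are indispensable; (iii) weak-$L^2$ extraction of limits and the identification $\bar\Phi_0=\Phi(\bar\d_B^2/2)$ (Proposition~\ref{p:LPE}), obtained by squeezing between Lemmas~\ref{lm:LPE1} and~\ref{l:IID}; and finally (iv) a Tauberian argument (Lemma~\ref{l:215}) to upgrade the time-averaged convergence to $\lim_{t\downarrow0}\int_D\Phi_t\,\diff\m$, which is precisely where the paper introduces the \emph{linearised heat semigroup} $\Delta^{\nabla u_{\tau-t}}$ and needs the hypotheses $\mathsf{C}_F,\mathsf{S}_F<\infty$ and $\m(\M)<\infty$. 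Your proposal does not identify the need to remove time-averaging, does not mention the truncation functions, and offers no substitute for the linearisation that circumvents the asymmetry $\int u_1\Delta u_2\neq\int u_2\Delta u_1$. As it stands, the lower bound is unproved.
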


To the best of our knowledge, 
Theorem~\ref{t:m1} is the first result establishing the integral Varadhan formula for non-linear semigroups. 
It is unclear whether the pointwise Varadhan estimate \eqref{VAR} can be properly formulated in our setting, 
since there is no concept of heat kernel density $p_t(x, y)$ for non-linear heat semigroups.
The assumption $\mssm(\M)<\infty$ is used only for the lower estimate of \eqref{eq:dAB}.
Though this assumption does not seem essential, we were not able to drop it due to the technicality of the linearised heat semigroup; see Subsection~\ref{ssc:outro} for more details.
The finiteness of the uniform convexity and smoothness constants is imposed also for a technical reason of constructing a linearised heat semigroup (see Lemma~\ref{l:215}), and is satisfied by, e.g., compact Finsler manifolds and Randers spaces $(\M,F)$, $F(v)=\sqrt{g(v,v)}+\beta(v)$, such that $g$ is a Riemannian metric and $\beta$ is a one-form on $\M$ with $|\beta|_g \le c<1$ for some $c<1$.

The asymmetry of the distance function $\d$ reveals the probabilistic nature of our Varadhan formula, 
which is not apparent in the symmetric setting. 
From the analytic (PDE) point of view, 
the semigroup $\T_t \1_B$ in $\P_t(A, B)= \int_A \T_t\1_B \diff \m$ 
could be regarded as describing the heat propagation from $B$, 
thereby the appearance of the distance $\d(A,B)$ from $A$ to $B$ may be counter-intuitive. 
From the probabilistic viewpoint (which is a dual perspective to the PDE one), however, $\d(A,B)$ is natural 
since $\T_t \1_B(x)$ represents the probability that a Brownian motion starting from $x$ lives in $B$ at time $t$ 
(in the Riemannian setting, to be precise; the existence of the Brownian motion is unknown in the Finsler case). 
%\footnote{The expression of the paragraph has been slightly changed. The previous version was commented out. I added (in the Riemannian setting) because the existence of the Brownian motion is open in the Finsler case.}

%The asymmetry of the distance function $\d$ reveals the probabilistic nature of our Varadhan formula, 
%which is not apparent in the symmetric setting. 
%In $\P_t(A, B)= \int_A \T_t\1_B \diff \m$, from the analytic (PDE) point of view, 
%$\T_t \1_B$ could be regarded as describing the heat propagation from $B$, 
%thereby the appearance of the distance $\d(A,B)$ from $A$ to $B$ may be counter-intuitive. 
%On the other hand, from the probabilistic viewpoint (which is dual to the PDE one), 
%$[\T_t \1_B](x)$ represents the probability that a Brownian motion starting from $x$ lives in $B$ at time $t$. 
%Therefore, it is natural to obtain $\d(A,B)$ in Theorem~\ref{t:m1}. 

The above observation should be compared with the fact that 
heat flow is regarded as the gradient flow of the relative entropy in the $L^2$-Wasserstein space 
with respect to the reverse Finsler structure $\overline{F}(v)=F(-v)$ (see Remark~\ref{rm:GF}). 
Since $\d(A,B)$ coincides with the distance from $B$ to $A$ with respect to $\overline{F}$, 
the analytic point of view seems consistent with $\overline{F}$. 

The upper estimate in Theorem~\ref{t:m1} is more flexible than the lower estimate, 
and can be generalised to the nonsmooth setting as follows, 
thanks to differential calculus developed in \cite{AmbGigSav14, Gig18}. 
We remark that reversible Finsler manifolds (satisfying $F(-v)=F(v)$) also fall into this framework. 
%\footnote{Is this remark necessary? The reversible case is already contained in the first statement.}

\begin{thm}\label{t:m2}
Let $(\X, \d, \m)$ be an infinitesimally strictly convex metric measure space.
Then, for any measurable sets $A,B \subset \X$ with $0< \m(A),\m(B) <\infty$, we have 
\[
\limsup_{t \downarrow 0}t\log \P_t(A, B) \le -\frac{1}{2}\mssd(A, B)^2  \fstop
\]
%Furthermore, if the local Sobolev-to-Lipschitz property holds, then we have 
%\[ \bar\d_\m(A, B) = \purple{\d}(A, B) \]
%for any open sets $A, B \subset \X$ with $0< \m(A),\m(B) <\infty$. 
\end{thm}

Note that sets $A, B \subset \X$ in Theorem~\ref{t:m2} need not be open.
If, in addition to the infinitesimal strict convexity, the Sobolev space~$W^{1,2}(\X, \d, \mssm)$ (that is not necessarily a Hilbert space in this generality) is reflexive, then, for $A,B \subset \X$ with $0< \m(A),\m(B) <\infty$,
\[
\limsup_{t \downarrow 0}t\log \P_t(A, B) \le -\frac{1}{2}\bar\mssd_\mssm(A, B)^2  \comma
\]
where $\bar\mssd_\mssm(A, B)$ is defined in Subsection~\ref{subsec:MM}.
If, furthermore, $(\X, \d, \mssm)$ possesses the local Sobolev-to-Lipschitz property (see Subsection~\ref{subsec:MM}), then we have
$\bar\mssd_\mssm(A, B)=\mssd(A, B)$ for open sets $A, B \subset \X$. 

On the one hand, both the infinitesimal strict convexity and the reflexivity of the Sobolev space hold, e.g., for RCD spaces.
On the other hand, Theorem~\ref{t:m2} (more precisely, Proposition~\ref{pr:U}) can be applied to some spaces without infinitesimal strict convexity by approximation; see Remark~\ref{rm:stability}, where we discuss the space $(\R^n, \|\cdot\|_p, \mssm)$ with the $\ell_p$ norm $\|\cdot\|_p$ for $1 \le p \le \infty$ and the Lebesgue measure $\mssm$.

Compared to the existing literature, there are two difficulties that need to be addressed: one arises from the fact that 
the energy form is not bilinear, due to the lack of the Leibniz rule for the gradient operator;
another difficulty arises  due to  the lack of symmetry: 
$\int_\M u_1 \Delta u_2 \diff\m \neq \int_\M u_2 \Delta u_1 \diff\m$, which is relevant to the proof of the lower bound of \eqref{eq:dAB}.
To overcome especially the latter point, we employ a linearisation technique for the heat semigroup in the Finsler case.
This approach is, however,  not applicable to the non-smooth setting in Theorem~\ref{t:m2} as we do not know if the linearised semigroup exists in metric measure spaces. 

\paragraph{The structure of the paper}%%%%%
After reviewing the basics of Finsler geometry in Section~\ref{sec:P}, 
we study the behaviour of the function $\bar\d_\m(A,B)$ in Section~\ref{sc:max}. 
Then, we prove the upper bound estimate in Theorem~\ref{t:m1} as well as Theorem~\ref{t:m2} in Section~\ref{sec:U}, 
and Section~\ref{sec:L} is devoted to the proof of the lower bound estimate in Theorem~\ref{t:m1}. 

\section*{Acknowledgement}
We are grateful to Karl-Theodor Sturm for his suggestion regarding Theorem~\ref{t:m2}, which helped us improve the assumption.
We also thank Davide Barilari for pointing out a couple of references concerning sub-Riemannian manifolds.
Lastly, we thank the anonymous referees for carefully reading the manuscript and providing us with constructive suggestions.

%%%%%%%%%%
\section{Preliminaries}\label{sec:P}%%%%%
%%%%%%%%%%

We first review the basics of Finsler geometry, and then introduce truncation functions as in \cite{HinRam03}
playing an essential role in the lower estimate in Section~\ref{sec:L}.

\subsection{Finsler manifolds}\label{ssc:Finsler}%%%%%
%%%%%
 
We refer the readers to~\cite{Oht21} for a concise description of the following contents, 
and also to \cite{GS01,OS09,OS14} for the behaviour of heat flow on Finsler manifolds. 

Let $\M$ be a connected $C^{\infty}$-manifold without boundary of dimension $n \ge 2$. 
Given local coordinates~$(x^i)_{i=1}^n$ on an open set~$U\subset \M$,
we will denote by $(x^i, v^j)_{i, j=1}^n$ the fibre-wise linear coordinates of the tangent bundle $TU$ given by
\[ v = \sum_{j=1}^n v^j \frac{\partial}{\partial x^j}\Bigl|_x \in T_x \M \comma \quad x \in U \fstop \]
We say that a nonnegative function~$F: T\M \to [0,\infty)$ is a \emph{$C^\infty$-Finsler structure} on $\M$ 
if the following three conditions hold:
\begin{enumerate}[(1)]
\item ({\it Regularity}) $F$ is $C^\infty$ on $T\M \setminus \{0\}$;
\item ({\it Positive $1$-homogeneity}) $F(cv)=cF(v)$ for every $v \in T\M$ and $c \ge 0$;
\item ({\it Strong convexity}) For every $v \in T\M \setminus \{0\}$,
the following $n \times n$ matrix is positive-definite:
\begin{equation}\label{eq:g_ij}
\bigl( g_{ij}(v)\bigr)^{n}_{i, j=1} 
 := \biggl( \frac{1}{2} \frac{\partial^2[F^2]}{\partial v^i\partial v^j}(v) \biggr)_{i,j=1}^n \fstop
\end{equation}
\end{enumerate}
We call a pair $(\M, F)$ a \emph{$C^\infty$-Finsler manifold}. 
We stress that the $1$-homogeneity is imposed only in the positive direction,
thereby $F(-v) \neq F(v)$ is allowed.
If $F(v)=F(-v)$ for all $v \in T\M$, then we say that $(\M, F)$ is \emph{reversible}. 
The matrix $(g_{ij}(v))$ in \eqref{eq:g_ij} provides an inner product $g_v$ of $T_x\M$ by 
\[ g_v\Biggl( \sum_{i=1}^n a_i \frac{\partial}{\partial x^i},\sum_{j=1}^n b_j \frac{\partial}{\partial x^j} \Biggr)
 := \sum_{i,j=1}^n a_i b_j g_{ij}(v) \fstop \]
We will also make use of their counterparts in the dual space $T^*_x \M$:
\begin{align}
&F^*(\alpha) :=\sup_{v \in T_x\M,\, F(v)=1} \alpha(v) \quad\ \text{for}\,\ \alpha \in T^*_x\M \comma
 \nonumber\\
&g^*_{ij}(\alpha) := \frac{1}{2} \frac{\partial^2 [(F^*)^2]}{\partial \alpha_i \partial \alpha_j}(\alpha)
 \quad\ \text{for}\,\ \alpha =\sum_{i=1}^n \alpha_i \diff x^i \in T^*_x\M \setminus \{0\} \comma
 \nonumber\\
&g^*_{\alpha} \Biggl( \sum_{i=1}^n a_i \diff x^i,\sum_{j=1}^n b_j \diff x^j \Biggr)
 := \sum_{i,j=1}^n a_i b_j g^*_{ij}(\alpha) \quad\ \text{on}\,\ T^*_x\M \fstop
 \label{eq:dual}
\end{align}
We remark that, though $\alpha(v) \le F^*(\alpha) F(v)$ holds by definition, 
$\alpha(v) \ge -F^*(\alpha) F(v)$ does not hold in general due to the irreversibility of $F$ 
(for example, one cannot replace the LHS of \eqref{e:KW3} with its absolute value). 

For $x,y \in \M$, we define
\[ \mssd(x, y) := \inf_{\eta}\int_0^1F \bigl( \dot{\eta}(t) \bigr) \diff t \comma \]
where the infimum is taken over all piecewise $C^1$-curves $\eta:[0,1] \to \M$ with $\eta(0)=x$ and $\eta(1)=y$.
Then $\d$ provides an \emph{asymmetric} distance function on $\M$,
namely the \emph{triangle inequality}
\[ \d(x,z) \le \d(x,y) +\d(y,z) \quad\ \text{for all}\,\ x,y,z \in \M \]
holds but $\d(y,x)$ may be different from $\d(x,y)$.
Note that $\d$ is symmetric if and only if $F$ is reversible.
We define the \emph{reversibility constant} of $(\M,F)$ as
\begin{equation}\label{eq:rev}
\Lambda_F := \sup_{v \in T\M \setminus \{0\}} \frac{F(v)}{F(-v)} 
 =\sup_{x,y \in \M,\, x \neq y} \frac{\d(x,y)}{\d(y,x)} \fstop
\end{equation}
Observe that $\Lambda_F \in [1,\infty]$ in general, and $\Lambda_F =1$ holds only in the reversible case. 

We say that $(\M,F)$ is \emph{forward complete} if any closed, forward bounded set $A \subset \M$ 
(i.e., $\sup_{y \in A}\d(x,y) <\infty$ for some or, equivalently, any $x \in \M$) is compact. 
The \emph{backward completeness} is defined as the forward completeness of $(\M,\overline{F})$, 
where $\overline{F}$ is the \emph{reverse} Finsler structure given by $\overline{F}(v):=F(-v)$. 
If $\Lambda_F <\infty$, then the forward and backward completenesses are mutually equivalent 
and we may simply call it the \emph{completeness}. 

\paragraph{Uniform convexity and smoothness}%%%%%
%%%%%
We define the \emph{uniform convexity} and \emph{smoothness constants} of $(\M,F)$ as 
\begin{equation}\label{eq:CS}
\mathsf{C}_F :=\sup_{x \in \M} \sup_{v,w \in T_x\M \setminus \{0\}} \frac{F(w)^2}{g_v(w,w)} \comma \qquad
\mathsf{S}_F :=\sup_{x \in \M} \sup_{v,w \in T_x\M \setminus \{0\}} \frac{g_v(w,w)}{F(w)^2} \comma
\end{equation}
respectively. 
Since $g_v$ comes from the Hessian of $F^2$,
$\mathsf{C}_F$ (resp.\ $\mathsf{S}_F$) actually measures the convexity (resp.\ concavity) of $F^2$ in tangent spaces. 
We have $\mathsf{C}_F, \mathsf{S}_F \in [1,\infty]$ in general, 
and $\mathsf{C}_F =1$ or $\mathsf{S}_F =1$ holds only in Riemannian manifolds 
(see \cite[Proposition 1.6]{Oht21}). 
On a compact Finsler manifold, $\mathsf{C}_F$ and $\mathsf{S}_F$ are finite
thanks to the smoothness and the strong convexity of $F$.
We also remark that their dual expressions are given by 
\begin{equation}\label{eq:CSdual}
\mathsf{C}_F =\sup_{x \in \M} \sup_{\alpha,\beta \in T^*_x\M \setminus \{0\}}
 \frac{g^*_{\alpha}(\beta,\beta)}{F^*(\beta)^2} \comma \qquad
\mathsf{S}_F =\sup_{x \in \M} \sup_{\alpha,\beta \in T^*_x\M \setminus \{0\}}
 \frac{F^*(\beta)^2}{g^*_{\alpha}(\beta,\beta)} \fstop
\end{equation}
We refer to \cite{Oht09}, \cite[\S 8.3.2]{Oht21} for more discussions on $\mathsf{C}_F$ and $\mathsf{S}_F$.
The reversibility constant $\Lambda_F$ in \eqref{eq:rev}
can be bounded by $\mathsf{C}_F$ and $\mathsf{S}_F$ as (see \cite[Lemma~8.18]{Oht21}) 
\[ \Lambda_F \le \min\bigl\{ \sqrt{\mathsf{C}_F} \comma \sqrt{\mathsf{S}_F} \bigr\} \fstop \]

\paragraph{Gradient vectors}%%%%%
%%%%%
For a differentiable function~$f: \M \to \R$, its \emph{gradient vector} at $x \in \M$ is defined
to be the Legendre transform of the derivative of $f$:
\[ \nabla f(x):=\mathcal L^* \bigl( \dif f(x) \bigr) \in T_x\M \fstop \]
Here the \emph{Legendre transform} $\mathcal L^*: T^*_x \M \to T_x \M$ maps 
$\alpha \in T_x^* \M$ to the unique element $v \in T_x \M$ 
such that $F(v) = F^*(\alpha)$ and $\alpha(v)=F^*(\alpha)^2$. 
We remark that $(g_{ij}(\mathcal{L}^*(\alpha)))$ is the inverse matrix of $(g^*_{ij}(\alpha))$, 
provided $\alpha \neq 0$. 
In local coordinates, we can write down 
\[ \nabla f(x) =\sum_{i,j=1}^n g^*_{ij}\bigl( \dif f(x) \bigr) \frac{\partial f}{\partial x^j}(x)
 \frac{\partial}{\partial x^i} \Big|_x \]
(when $\dif f(x) \neq 0$; while $\nabla f(x)=0$ if $\dif f(x)=0$). 
We say that $f$ is \emph{$1$-Lipschitz} if 
\[ f(y)-f(x) \le \d(x,y) \quad\ \text{for all}\,\ x,y \in \M \comma \]
which is equivalent to $F(\nabla f) \le 1$ when $f$ is differentiable. 
For example, given $x_0 \in \M$, the functions 
$x \mapsto \d(x_0,x)$ and $x \mapsto -\d(x,x_0)$ are $1$-Lipschitz by the triangle inequality. 

\begin{rem}[Non-linearity of~$\nabla$]\label{rm:nabla}
As the differential $\dif$ stems only from the differentiable structure of $\M$,  
it is a linear operator and enjoys the chain and Leibniz rules. 
However, the Legendre transform $\mathcal{L}^*$ is non-linear
($\mathcal{L}^*(\alpha+\beta) \neq \mathcal{L}^*(\alpha) +\mathcal{L}^*(\beta)$)
and irreversible ($\mathcal{L}^*(-\alpha) \neq \mathcal{L}^*(\alpha)$).
Therefore, the gradient operator $\nabla$ does not satisfy the Leibniz rule, 
and the chain rule holds only for non-decreasing functions:
\[ \nabla (\phi \circ f) =\mathcal{L}^* (\phi' \circ f \cdot \dif f) =\phi' \circ f \cdot \nabla f
 \quad\ \text{if}\ \phi' \ge 0 \comma \] 
while we have $\nabla (\phi \circ f) =-\phi' \circ f \cdot \nabla(-f)$ if $\phi' \le 0$. 
In the reversible case, the chain rule holds regardless of the sign of $\phi'$. 
\end{rem}

\paragraph{Heat semigroup}%%%%%
%%%%%
Now, we fix a positive $C^{\infty}$-measure $\m$ on $\M$ 
(in the sense that, in each local chart, 
$\m=\rho \diff x^1 \cdots \diff x^n$ for a positive $C^{\infty}$-function $\rho$). 
Then the \emph{divergence} of a differentiable vector field $V$ on $\M$
with respect to $\m$ is defined in local coordinates as 
\[ \div_{\mssm}V := \sum_{i=1}^n \biggl( \frac{\partial V^i}{\partial x^i} + V^i\frac{\partial \psi}{\partial x^i} \biggr) \comma \]
where $V = \sum_{i=1}^n V^i (\partial/\partial x^i)$ 
and we wrote $\m=e^{\psi} \diff x^1 \cdots \diff x^n$ in the coordinates. 
It can be generalised to measurable vector fields $V$ in the distributional sense 
(against $C^{\infty}$-functions of compact support) as 
\[ \int_\M \phi\, \div_{\mssm} V \diff \mssm = - \int_\M \dif\phi(V) \diff \mssm 
\quad\ \text{for all}\,\ \phi \in C_c^\infty(\M) \fstop \]
Then we define the \emph{distributional Laplacian} $\Delta:=\div_\m \circ \nabla$, i.e., 
\[ \int_\M \phi \Delta u \diff \mssm := - \int_\M \dif \phi(\nabla u) \diff \mssm 
\quad\ \text{for all}\,\ \phi \in C_c^\infty(\M) \fstop \]
This Laplacian is again non-linear by the non-linearity of $\nabla$. 
Moreover, 
\begin{equation}\label{eq:asym}
\int_\M u_1 \Delta u_2 \diff\m \neq \int_\M u_2 \Delta u_1 \diff\m 
\end{equation}
in general (in other words, $\dif u_1(\nabla u_2) \neq \dif u_2(\nabla u_1)$). 

Define the \emph{energy functional} $\E$ on $H^1_\loc(\M)$ as 
\[ \E(u) := \frac{1}{2} \int_\M F(\nabla u)^2 \diff \mssm = \frac{1}{2} \int_\M F^*(\dif u)^2 \diff \mssm \]
($H^1_\loc(\M)$ is defined solely by the differentiable structure of $\M$ via local charts). 
The \emph{Sobolev space} $H^1(\M)$ is defined as the set of functions $u \in L^2(\M) \cap H^1_{\loc}(\M)$ 
such that $\E(u)+\E(-u) <\infty$. 
Denote by $H^1_0(\M)$ the closure of $C_c^\infty(\M)$ with respect to the norm 
\[ \|u\|_{H^1} := \sqrt{ \|u\|_{L^2}^2 +\E(u) +\E(-u)} \fstop \]
Note that $(H^1(\M),\|\cdot\|_{H^1})$ is not a Hilbert space but a reflexive Banach space. 
If $\Lambda_F <\infty$ and $(\M,F)$ is complete, then we have $H^1_0(\M)=H^1(\M)$ 
(see \cite[Lemma~11.4]{Oht21}). 

The \emph{$L^2$-heat semigroup} $u_t=\T_t f$ is defined as the solution to the Cauchy problem: 
\[ \partial_t u_t = \frac{1}{2} \Delta u_t \comma \qquad
 u_0 = f \fstop \]
One can construct $(u_t)_{t \ge 0}$ as gradient flow of the energy $\E$ in $L^2(\M)$, 
provided $\Lambda_F<\infty$. 
Precisely, we first construct $\T_t f$ for $f \in H^1_0(\M)$ 
and then extend it to a contraction semigroup acting on $L^2(\M)$ 
(see, e.g., \cite{AmbGigSav08}, \cite[\S 13.2]{Oht21}). 
Thanks to the regularizing effect as in \cite[Theorem~4.0.4]{AmbGigSav08}, \cite[(4.26)]{AmbGigSav14}, 
we have $\T_t L^2(\M) \subset H^1_0(\M)$ for $t>0$. 
We stress that the heat semigroup~$(\T_t)_{t \ge 0}$ is non-linear. 
Indeed, $\T_t(f_1 +f_2) = \T_t f_1 +\T_t f_2$ does not hold, 
and $\T_t(cf) =c\T_t f$ holds only when $c \ge 0$ due to the irreversibility. 
Moreover, $\T_t f$ is not smooth at points where $\dif [\T_t f]$ vanishes.

\begin{rem}\label{rm:GF}
Besides the above interpretation of heat flow as gradient flow of the energy, 
one can also regard heat flow as gradient flow of the relative entropy in the $L^2$-Wasserstein space. 
Precisely, in the Finsler case, we need to consider the Wasserstein space 
for the \emph{reverse} Finsler structure $\overline{F}(v)=F(-v)$ (we refer to \cite{OS09,OZ22} for details). 
This fact could be compared with the appearance of $\d(A,B)$, rather than $\d(B,A)$, in our results.
\end{rem}

\paragraph{Linearised heat semigroups}%%%%%
%%%%%
In the last step of the lower estimate in Section~\ref{sec:L}, 
we will employ a linearisation of the heat semigroup to overcome a difficulty due to the asymmetry~\eqref{eq:asym}. 
See \eqref{eq:linear} in Lemma \ref{l:215} below for the precise equation, 
here we recall a related result from \cite[\S 13.5]{Oht21}. 

Let $(u_t)_{t \ge 0}$ be a solution to the heat equation, 
and take a measurable one-parameter family $(V_t)_{t \ge 0}$ of nowhere vanishing vector fields 
such that $V_t(x) =\nabla u_t(x)$ for all $x$ with $\dif u_t(x) \neq 0$. 
Given $f \in H^1_0(\M)$, there exists a (weak) solution $(f_t)_{t \ge 0} \subset H^1_0(\M)$ 
to the \emph{linearised heat equation} 
\begin{equation}\label{eq:lin-heat}
\partial_t f_t =\frac{1}{2} \Delta\!^{\nabla u_t} f_t \comma \qquad f_0=f \comma 
\end{equation}
where $\Delta\!^{\nabla u_t}:=\div_\m \circ \nabla^{\nabla u_t}$ is the \emph{linearised Laplacian} defined with 
\begin{equation}\label{eq:lin-gra}
\nabla^{\nabla u_t} h := \sum_{i,j=1}^n g^*_{ij} \bigl( (\mathcal{L}^{*})^{-1}(V_t) \bigr)
 \frac{\partial h}{\partial x^j} \frac{\partial}{\partial x^i} \fstop 
\end{equation}
Note that we suppressed the dependence on the choice of $V_t$ for simplicity. 
We also remark that $\Delta\!^{\nabla u_t} u_t =\Delta u_t$.
Linearised heat semigroups play an essential role in geometric analysis on Finsler manifolds, 
including gradient estimates \cite{OS14} as well as functional and geometric inequalities \cite{Oht17a,Oht22}. 

\subsection{Truncation functions}\label{ssc:cut}%%%%%
%%%%%

We shall introduce some useful functions as in \cite[\S 2.1]{HinRam03}. 
Let $\zeta: [0,\infty) \to [0,\infty)$ be a bounded concave $C^3$-function satisfying: 
\begin{enumerate}[{\rm (1)}]
\item $\zeta(t)=t$ for $t \in [0,1]$ and $0<\zeta'(t)\le 1$ for all $t \ge 0$;
\item There is a positive constant $C$ such that $0 \le -\zeta''(t) \le C\zeta'(t)$ for all $t \ge 0$.
\end{enumerate}
For instance, any $C^{\infty}$-function $\zeta$ such that 
$\zeta(0)=0$ and $\zeta'$ is non-increasing with 
\begin{align*}
\zeta'(t)=
\begin{cases}
1 \quad & 0 \le t \le 1 \comma \\
e^{-t} \quad & t \ge 2 
\end{cases}
\end{align*}
satisfies the required properties. 
By these conditions, the monotone limit $\lim_{t \to \infty} \zeta(t)=L$ exists. 
Define $\varphi^K(t) :=K\zeta(t/K)$ for $K>0$. 
For notational simplicity, we do not write $K$ explicitly when no confusion could occur. 
We also set
\[ \Phi(t) :=\int_0^t \varphi'(s)^2 \diff s \comma \qquad \Psi(t) :=t\varphi'(t)^2 \fstop \]
Then we immediately have the following estimates:
\begin{align} \label{e:estimates}
&0<\varphi'(t) \le 1\comma \quad 0 \le -\varphi''(t) \le \frac{C}{K}\varphi'(t) \comma \quad 
 \Phi(t)=\Psi(t)=t \,\ \text{on}\ [0, K] \comma 
\\ 
&0 \le \Psi(t) \le \Phi(t) \le \int_0^t \varphi'(s)\diff s = \varphi(t) \le KL \fstop \notag
\end{align}

%%%%%%%%%%
\section{Maximal functions}\label{sc:max}%%%%%
%%%%%%%%%%

In this section, we assume $\Lambda_F <\infty$ and the completeness of $(\M,F)$. 
To begin the proof of Theorem~\ref{t:m1}, 
fix $x_0 \in \M$ and let $(B_{r_k}(x_0))_{k \in \N}$ and $(\chi_k)_{k \in \N}$ be sequences 
of open forward balls (i.e., $B_r(x):=\{ y \in \M : \d(x,y)<r \}$)  and functions such that $\lim_{k \to \infty} r_k =\infty$, 
$0 \le \chi_k \le 1$, $\chi_k \equiv 1$ on $B_{r_k}(x_0)$, $\chi_k \equiv 0$ on $\M \setminus B_{r_{k+1}}(x_0)$, 
and $-\chi_k$ is $1$-Lipschitz. 
Note that, in particular, $\chi_k$ is compactly supported. 
Throughout this article, for simplicity, 
we omit an arbitrarily fixed centre $x_0$ and write $B_k:=B_{r_k}(x_0)$. 

We next introduce a distance-like function $\bar\d_B$ for a measurable set $B \subset \M$. 
We will always assume $0<\m(A),\m(B)<\infty$. 
For $R \ge 0$, define 
\begin{align}\label{eq:L_AM}
\L_{B,R} :=\{f \in \L: f=0 \ \text{a.e.~on $B$ and $0 \le f \le R$ a.e.} \} \fstop
\end{align}
Recall the definitions \eqref{eq:d_m}, \eqref{eq:L} of $\bar\d_\m(A, B)$ and $\L$ given in the introduction. 
We also set 
\[
\d_\m(A, B) :=\essinf_{x \in A} \inf_{y \in B} \d(x,y) \fstop 
\]

\begin{prop} \label{pr:dAB}
%Suppose that~$W^{1,2}(\M)$ is reflexive. 
For any measurable set $B \subset \M$, 
there exists a unique $[0,\infty]$-valued measurable function~$\bar\d_B$ such that, 
for every $R>0$, the function $\bar\d_B \wedge R$ is the maximal element of $\L_{B,R}$. 
Precisely, $\bar\d_B \wedge R \in \L_{B,R}$ and $f \le \bar\d_B \wedge R$ holds a.e.\ for any $f \in \L_{B,R}$. 
Furthermore, for any measurable set $A \subset \M$, we have 
\[ \bar\d_\m(A,B) =\essinf_{x \in A} \bar\d_B(x) \fstop \]
\end{prop}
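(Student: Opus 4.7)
The plan is to build $\bar\d_B$ in two stages: first construct each truncation $\bar\d_B \wedge R$ as the maximum of the lattice $\L_{B,R}$, then verify consistency across $R$, and finally derive the identity $\bar\d_\m(A,B)=\essinf_A \bar\d_B$ by truncating shifted test functions drawn from $\L$.

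The first step is to observe that $\L_{B,R}$ is closed under finite pointwise maxima: for $f,g \in \L_{B,R}$ the bounds $0 \le f \vee g \le R$ and the vanishing on $B$ are immediate, and since the weak derivative of $f \vee g$ equals $\dif f$ on $\{f > g\}$ and $\dif g$ on $\{g > f\}$, the pointwise bound $F^*(-\dif(f \vee g)) \le 1$ a.e.\ survives. Because $\L_{B,R}$ is in addition $L^\infty$-bounded by $R$, a standard essential-supremum argument produces an increasing sequence $(f_n) \subset \L_{B,R}$ with pointwise supremum $\bar f$ equal a.e.\ to $\esssup \L_{B,R}$.

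The main technical step, and the main obstacle I anticipate, is to show that $\bar f$ itself lies in $\L_{B,R}$ despite the nonlinearity and irreversibility of the constraint $F^*(-\dif \cdot) \le 1$. Since $\mssC_F, \mssS_F < \infty$ forces $\Lambda_F < \infty$, the bound $F^*(-\dif f_n) \le 1$ yields $F^*(\dif f_n) \le \Lambda_F$, so $(f_n)$ is bounded in $H^1_\loc(\M)$. Extracting a weak $H^1$-limit on each compact set (which must agree with $\bar f$ a.e.\ by $L^\infty$-convergence) and invoking the fibre-wise convexity of $\{\alpha \in T^*_x\M : F^*(-\alpha) \le 1\}$ via Mazur's lemma, one preserves the gradient bound in the limit, producing the maximal element, which I denote $\bar\d_B \wedge R$; uniqueness is automatic from maximality. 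Consistency across $R$ follows because truncating $\bar\d_B \wedge R_2$ by $R_1 < R_2$ yields an element of $\L_{B,R_1}$ dominating every competitor in $\L_{B,R_1} \subset \L_{B,R_2}$, so the two maxima agree a.e.; setting $\bar\d_B(x) := \lim_{R \to \infty}(\bar\d_B \wedge R)(x)$ gives a well-defined $[0,\infty]$-valued measurable function.

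For the identity $\bar\d_\m(A,B) = \essinf_A \bar\d_B$, the inequality $\le$ is obtained by fixing $f \in \L$, setting $c := \esssup_B f$, and choosing $R \ge \|f\|_\infty + |c|$ so that $g := ((f-c) \vee 0) \wedge R$ coincides a.e.\ with $(f - c) \vee 0$; one checks $g \in \L_{B,R}$, reads off $f - c \le g \le \bar\d_B$ a.e., applies $\essinf_A$, and takes the supremum over $f$. For the reverse inequality, $\bar\d_B \wedge R \in \L$ itself is a valid test function with $\esssup_B(\bar\d_B \wedge R) = 0$, whence $\essinf_A(\bar\d_B \wedge R) \le \bar\d_\m(A,B)$; passing to the limit in $R$ is justified by the elementary observation that if $c < \essinf_A \bar\d_B$ then $\bar\d_B \ge c$ a.e.\ on $A$, so $\bar\d_B \wedge R \ge c$ a.e.\ on $A$ as soon as $R \ge c$, giving $\lim_R \essinf_A(\bar\d_B \wedge R) = \essinf_A \bar\d_B$ and closing the argument.
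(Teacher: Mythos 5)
Your argument is correct, and it is essentially the same as the paper's, which simply defers to the construction in the cited \cite{AriHin05} (Proposition~3.11) after appropriate substitutions: close the lattice $\L_{B,R}$ under finite maxima, realise $\esssup \L_{B,R}$ as an increasing pointwise limit, preserve the gradient constraint $F^*(-\dif\cdot)\le 1$ in the limit via weak $H^1$-compactness plus convexity/closedness of the constraint set (equivalently Mazur's lemma), verify compatibility of truncations across $R$, and then prove the identity for $\bar\d_\m$ by shifting and truncating test functions. One cosmetic remark: you invoke $\mathsf{C}_F,\mathsf{S}_F<\infty$ to get $\Lambda_F<\infty$, but Section~\ref{sc:max} already assumes $\Lambda_F<\infty$ directly (the finiteness of $\mathsf{C}_F,\mathsf{S}_F$ only enters later, in the lower estimate), so only the weaker hypothesis is needed here.
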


\begin{proof}
We can follow the lines of \cite[Proposition~3.11]{AriHin05}
by replacing $E_k$, $\mathbb D$, $\mathbb D_{A, M}$ and $\mathbb D_0$ there
by $B_k$, $H^1_0(\M)=H^1(\M)$, $\mathbb L_{B,R}$ and $\L$. 
\end{proof}

\begin{rem}\label{rm:to/from}
The requirement $F^*(-\dif f) \le 1$ in \eqref{eq:L} means that 
$\bar\d_B$ is regarded as the distance ``to'' the set $B$ rather than the distance ``from'' $B$.
We need to distinguish them in the present situation where the distance function $\d$ is asymmetric. 
\end{rem}

Set $\d_B(x):=\inf_{y \in B}\d(x, y)$ for $x \in \M$. 
Note that $-\d_B$ is $1$-Lipschitz by the triangle inequality, 
and $F^*(-\dif \d_B) \le 1$ a.e. 
We compare $\d_B$ with $\bar\d_B$ obtained in~Proposition~\ref{pr:dAB}. 

\begin{lem} \label{lm:DM}
%Suppose that~$W^{1,2}(\M)$ is reflexive. 
For any measurable set $B \subset \M$, we have $\d_B \le \bar\d_B$ a.e. 
In particular, for any measurable sets $A,B \subset \M$, we have 
\begin{align*} %\label{e:MD0}
\d_\m(A, B) \le \bar\d_\m(A, B) \fstop
\end{align*}
\end{lem}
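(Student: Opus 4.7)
The plan is to exhibit a truncation of $\d_B$ as an admissible competitor in the extremal class $\L_{B,R}$ of Proposition \ref{pr:dAB} and then invoke the maximality property.

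First I would verify that $\d_B$ satisfies the Finsler bound $F^*(-\dif \d_B) \le 1$ almost everywhere. Starting from the triangle inequality, for any $x, z \in \M$,
\[ \d_B(x) \le \d(x, z) + \d_B(z), \qquad \text{hence} \qquad \d_B(x) - \d_B(z) \le \d(x, z). \]
Testing with a $C^1$-curve $\eta$ satisfying $\eta(0) = x$ and $\dot\eta(0) = v \in T_x\M$, substituting $z = \eta(t)$, dividing by $t > 0$, and letting $t \downarrow 0$ yield $-\dif \d_B(x)(v) \le F(v)$ at any point of differentiability. Taking the supremum over $\{F(v) = 1\}$ gives $F^*(-\dif \d_B) \le 1$ a.e. Since $\Lambda_F < \infty$, this one-sided bound upgrades $\d_B$ to a locally Lipschitz function in the ordinary symmetric sense, hence $\d_B \in H^1_{\loc}(\M)$.

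Next I would check that, for each $R > 0$, the truncation $f := \d_B \wedge R$ belongs to $\L_{B,R}$. It is nonnegative and bounded by $R$, vanishes on $B$ (since $\d_B(y) \le \d(y,y) = 0$ for $y \in B$), and by the usual chain rule for Lipschitz cut-offs $\dif f$ coincides with $\dif \d_B$ on $\{\d_B < R\}$ and vanishes a.e.\ on $\{\d_B \ge R\}$; hence $F^*(-\dif f) \le 1$ a.e. Proposition \ref{pr:dAB} then gives $\d_B \wedge R \le \bar\d_B \wedge R$ a.e., and letting $R \to \infty$ delivers $\d_B \le \bar\d_B$ a.e. The integrated statement $\d_\m(A, B) \le \bar\d_\m(A, B)$ follows by applying $\essinf_{x \in A}$ to both sides and using the formula $\bar\d_\m(A, B) = \essinf_{x \in A} \bar\d_B(x)$ from Proposition \ref{pr:dAB}.

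The one point requiring care is the sign convention in the asymmetric setting: the one-sided triangle inequality above naturally controls $-\dif \d_B$ rather than $\dif \d_B$, which matches precisely the definition of $\L$ in \eqref{eq:L}. Once this is recognised, the rest is a direct application of the extremality of $\bar\d_B$.
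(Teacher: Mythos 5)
Your proof is correct and takes essentially the same route as the paper: the paper notes just before the lemma that $-\d_B$ is $1$-Lipschitz by the triangle inequality and $F^*(-\dif\d_B)\le 1$ a.e., then verifies $\d_B \wedge R \in \L_{B,R}$, invokes the maximality of $\bar\d_B \wedge R$ from Proposition~\ref{pr:dAB}, lets $R \to \infty$, and passes to the essential infimum. You simply spell out the one-sided triangle-inequality argument in more detail, and correctly flag the sign convention forced by the irreversibility of $F$.
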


\begin{proof}
Since $\d_B \wedge R \in \L_{B,R}$ for every $R>0$, 
we have $\d_B \wedge R \le \bar\d_B \wedge R$  by the maximality of $\bar\d_B \wedge R$ in $\L_{B,R}$. 
Letting $R \to \infty$, we obtain $\d_B \le \bar\d_B$ a.e. 
The latter assertion then follows from the definition of $\d_\m(A,B)$ and Proposition~\ref{pr:dAB} as
\[ \d_\m(A, B) = \essinf_{x \in A} \d_B(x) \le \essinf_{x \in A} \bar\d_B(x) = \bar\d_\m(A, B) \fstop \]
\end{proof}

For open sets, these distance-like functions actually coincide with the usual distance 
$\d(A,B)=\inf_{x \in A,\, y \in B}\d(x, y) =\inf_{x \in A} \d_B(x)$. 

\begin{lem} \label{lm:CDD}
%Suppose that~$W^{1,2}(\M)$ is reflexive and \eqref{sl} holds. 
For any open set $B \subset \M$, we have $\bar\d_B = \d_B$ a.e. 
In particular, for any open sets $A,B \subset \M$, we have 
\begin{align*} %\label{e:MD1}
\bar\d_\m(A, B) = \d_\m(A, B) = \d(A,B) \fstop
\end{align*}
\end{lem}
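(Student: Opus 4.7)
The plan is to establish the reverse of the inequality given in Lemma~\ref{lm:DM} (namely $\bar\d_B \le \d_B$ a.e.), and then to promote the essential infima to pointwise infima on the open set $A$.

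First, I would fix $R>0$ and invoke Proposition~\ref{pr:dAB} to get $f:=\bar\d_B \wedge R \in \L_{B,R}$; so $f \in H^1_{\loc}(\M) \cap L^\infty(\M)$ with $F^*(-\dif f) \le 1$ a.e., and $f=0$ a.e.\ on $B$. The crucial step is to select a representative of $f$ that is genuinely Lipschitz for the asymmetric distance $\d$: since $(\M,F)$ is a smooth Finsler manifold with $\Lambda_F<\infty$ and $\mssm$ is a smooth positive measure, the local Sobolev-to-Lipschitz property holds (locally $F$ is comparable to a Riemannian metric). Applied to $-f$ with the one-sided bound $F^*(-\dif f)\le 1$, this yields a continuous representative of $f$, still denoted $f$, satisfying
\[
f(x)-f(y) \le \d(x,y) \quad \text{for all } x,y \in \M.
\]

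Second, because $B$ is open and $f=0$ a.e.\ on $B$, the continuity of $f$ forces $f \equiv 0$ pointwise on $B$. For every $x \in \M$ and $y \in B$ we then obtain $f(x) \le f(y)+\d(x,y)=\d(x,y)$; taking the infimum over $y \in B$ gives $f(x)\le \d_B(x)$, i.e.\ $\bar\d_B \wedge R \le \d_B$ everywhere. Letting $R \uparrow \infty$ yields $\bar\d_B \le \d_B$ a.e., which combined with Lemma~\ref{lm:DM} gives the first assertion $\bar\d_B=\d_B$ a.e.

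For the ``in particular'' statement, Proposition~\ref{pr:dAB} and the identity just proved give $\bar\d_\m(A,B)=\essinf_{x \in A}\bar\d_B(x)=\essinf_{x \in A}\d_B(x)=\d_\m(A,B)$. It remains to identify this with $\d(A,B)$. Since $-\d_B$ is $1$-Lipschitz by the triangle inequality and $\Lambda_F<\infty$, the function $\d_B$ is continuous on $\M$; as $A$ is open and $\mssm$ is a smooth positive measure of full support, the essential infimum of $\d_B$ over $A$ coincides with its pointwise infimum, which equals $\inf_{x\in A,\,y\in B}\d(x,y)=\d(A,B)$.

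The main obstacle is the Sobolev-to-Lipschitz step: one must extract a Lipschitz representative from the one-sided condition $F^*(-\dif f)\le 1$ in the asymmetric Finsler setting. In the smooth case this is a standard curve-integration argument using local Riemannian comparability together with the definition of $\d$ as an infimum over piecewise $C^1$ curves, but care is required because the one-sided bound only delivers the one-sided estimate $f(x)-f(y)\le \d(x,y)$, which is exactly what reflects the asymmetry of $\d$ and matches the interpretation of $\bar\d_B$ as the distance \emph{to} $B$ rather than \emph{from} $B$.
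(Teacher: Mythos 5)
Your proposal matches the paper's own proof essentially step for step: both pass through Proposition~\ref{pr:dAB} to obtain $\bar\d_B \wedge R \in \L_{B,R}$, invoke the local Sobolev-to-Lipschitz property (Proposition~\ref{pr:StL}) to get a continuous $1$-Lipschitz representative of $-f$, use openness of $B$ and continuity to pass from ``$f=0$ a.e.\ on $B$'' to ``$f\equiv 0$ on $B$,'' and then take infimum over $y\in B$ and let $R\to\infty$ before applying Lemma~\ref{lm:DM}. The final reduction from essential infimum to pointwise infimum via continuity of $\d_B$ and openness of $A$ is also exactly what the paper does.
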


\begin{proof}
Since $\bar\d_B \wedge R \in \L_{B,R}$ and $B$ is open, from the proposition below, 
its continuous version $f$ satisfies $f \equiv 0$ on $B$, and $-f$ is $1$-Lipschitz. 
Hence, for all $y \in B$ and a.e.\ $x \in \M$, we have 
\begin{align*}
\bar\d_B(x) \wedge R =  f(x) = f(x)-f(y) \le \d(x, y) \fstop
\end{align*}
Letting $R \to \infty$ and taking the infimum in $y \in B$, we conclude $\bar\d_B \le \d_B$ a.e. 
Combining this with Lemma~\ref{lm:DM}, we deduce the former assertion. 
Then $\bar\d_\m(A, B) = \d_\m(A, B)$ follows from the definition of $\d_\m(A, B)$ and Proposition~\ref{pr:dAB}, 
while $\d_\m(A,B)=\d(A,B)$ is immediate since $A$ is open and $\d_B$ is continuous. 
\end{proof}

Though the next proposition (called the \emph{local Sobolev-to-Lipschitz property}) 
should be a known fact, we give a short proof for completeness. 

\begin{prop}\label{pr:StL}
For any $f \in \L$, there exists a bounded $1$-Lipschitz function $-\tilde{f}$ such that $\tilde{f}=f$ a.e.
\end{prop}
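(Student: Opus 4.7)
The plan is to construct $\tilde{f}$ as a 1-Lipschitz inf-convolution with the asymmetric distance, resting on the almost-everywhere Lipschitz inequality $f(x) - f(y) \le \d(x,y)$, which I extract from the hypothesis $F^*(-\dif f) \le 1$ a.e.\ by local mollification. Throughout, the completeness of $(\M, F)$ and $\Lambda_F < \infty$ ensure that $\d$ is finite and topologically well-behaved on $\M \times \M$.

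First, in a relatively compact coordinate chart $(U, \phi)$ of $\M$, I form the Euclidean convolution $f_\epsilon := f * \rho_\epsilon$ with a standard mollifier $\rho_\epsilon$. Since $F^*(x, -\cdot)$ is a norm on $T_x^*\M$ (by the strong convexity of $F$) and therefore convex in the fibre, and since $F^*$ is uniformly continuous on compact subsets of $T^*\M$, an application of Jensen's inequality in the cotangent variable followed by the comparison $F^*(x, \xi) \le (1+\omega(\epsilon))\, F^*(y, \xi)$ for $|x - y| < \epsilon$ yields, on any $U' \Subset U$,
\[
F^*\bigl(x, -\dif f_\epsilon(x)\bigr) \le \int F^*\bigl(x, -\dif f(y)\bigr)\, \rho_\epsilon(x - y)\, \diff y \le 1 + \omega(\epsilon),
\]
for some $\omega(\epsilon) \to 0$ as $\epsilon \to 0$.

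For the smooth function $f_\epsilon$, the fundamental theorem of calculus applied along a piecewise $C^1$-curve $\gamma$ from $x$ to $y$ then gives
\[
f_\epsilon(x) - f_\epsilon(y) = \int_0^1 (-\dif f_\epsilon)(\dot\gamma)\, \diff t \le \bigl(1 + \omega(\epsilon)\bigr) \int_0^1 F(\dot\gamma)\, \diff t;
\]
minimising over $\gamma$ and patching the local estimates via a locally finite cover of $\M$ by such $U'$ together with a chain argument along nearly minimising curves yields $f_\epsilon(x) - f_\epsilon(y) \le (1 + \omega(\epsilon))\,\d(x, y)$ for all $x, y \in \M$. Extracting a subsequence $\epsilon_k \downarrow 0$ with $f_{\epsilon_k} \to f$ $\mssm$-a.e.\ on a full-measure set $E \subset \M$, I obtain $f(x) - f(y) \le \d(x, y)$ for all $x, y \in E$. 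Finally, I define
\[
\tilde f(x) := \inf_{y \in E}\bigl[f(y) + \d(x, y)\bigr], \qquad x \in \M.
\]
The triangle inequality immediately gives $\tilde f(x_1) - \tilde f(x_2) \le \d(x_1, x_2)$ for all $x_1, x_2 \in \M$, i.e., $-\tilde f$ is $1$-Lipschitz; boundedness follows from $f \in L^\infty$ and density of $E$. For $x \in E$ the choice $y = x$ gives $\tilde f(x) \le f(x)$, while the established a.e.\ inequality gives $\tilde f(x) \ge f(x)$, so $\tilde f = f$ on $E$, i.e., $\tilde f = f$ a.e.

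The principal difficulty is the pointwise gradient bound $F^*(-\dif f_\epsilon) \le 1 + \omega(\epsilon)$: because the dual norm $F^*(x, -\cdot)$ varies with the base point, one cannot simply move $F^*$ through the convolution, and the estimate hinges on combining fibre-wise convexity of $F^*$ with uniform continuity in the base direction on compact sets. A secondary technical point is the global patching of the local Lipschitz estimates, which is standard but requires both the completeness of $(\M, F)$ and careful chaining across charts so that the intrinsic distances within the charts are comparable to $\d$.
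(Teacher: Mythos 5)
Your approach is close in spirit to the paper's, which also reduces to the Euclidean setting in local charts and then deduces the Lipschitz bound from $F^*(-\dif f)\le 1$ a.e.; but where the paper simply quotes Evans--Gariepy for the existence of a locally Lipschitz representative and then invokes the standard fact that an a.e.\ differential bound for a locally Lipschitz function upgrades to a Lipschitz estimate, you unpack that black box via explicit mollification and close with an inf-convolution. The inf-convolution step is correct and tidy (and a pleasant alternative to gluing local Lipschitz extensions), and your local estimate $F^*\bigl(x,-\dif f_\epsilon(x)\bigr)\le 1+\omega(\epsilon)$ is fine, including the point about moving $F^*$ through the convolution via fibre-wise convexity plus uniform continuity in the base.

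The step that needs repair is the passage from local to global \emph{before} the $\epsilon\to 0$ limit. The assertion ``$f_\epsilon(x)-f_\epsilon(y)\le(1+\omega(\epsilon))\d(x,y)$ for all $x,y\in\M$'' is not well posed: $f_\epsilon$ is a chart-by-chart Euclidean convolution, so different charts produce different functions that need not agree on overlaps, and a chain along a nearly minimising curve crossing several charts picks up uncontrolled jumps at the chart boundaries (these jumps go to zero in $L^2_{\loc}$, not pointwise). The clean fix is to first take $\epsilon\to 0$ \emph{within each chart} to obtain, for a.e.\ $x,y$ in $U'$, $f(x)-f(y)\le\d_{U'}(x,y)$ with the intrinsic chart distance, and only then chain at the level of $f$: given $x,y$ in the full-measure set $E$ and a nearly minimising curve $\gamma$, perturb the subdivision points $\gamma(t_i)$ into $E$ inside chart overlaps (using that $E$ is dense and $\d_{U'_i}$ is continuous) to obtain $f(x)-f(y)\le\d(x,y)$ on $E$. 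Alternatively, perform your inf-convolution locally in each chart to produce Lipschitz representatives that necessarily agree on overlaps (being continuous versions of the same $L^1_{\loc}$ function), glue to a global continuous $\tilde f$, and then run the chain argument for $\tilde f$ directly, where it works for \emph{all} points, not just a.e.\ ones. With either repair, the rest of your argument (inf-convolution identities, boundedness via density of $E$) goes through.
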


\begin{proof}
By multiplying with a smooth cut-off function in each local chart, 
one can reduce the existence of a (locally) Lipschitz function $\tilde{f}$ such that $\tilde{f}=f$ a.e.\ 
to the Euclidean case. 
The Euclidean case can be seen, e.g., in \cite[Theorem~4.5]{EvaGar15}. 
Then $F^*(-\dif \tilde{f}) =F^*(-\dif f) \le 1$ a.e.\ yields that $-\tilde{f}$ is $1$-Lipschitz. 
\end{proof}

%%%%%%%%%%
\section{Upper estimate} \label{sec:U}%%%%%
%%%%%%%%%%

For measurable sets $A,B \subset \M$, recall that we define 
\[ \P_t(A, B):=\int_A \T_t \mathbf 1_B \diff \mssm \fstop\]
In this section, we establish the upper estimate of~\eqref{eq:dAB}. 
Our proof is essentially along the lines of \cite[Theorem~4.1]{AriHin05} or \cite[Theorem~2.8]{HinRam03}, 
where the former is a generalisation of the latter to admit infinite total mass. 
The upper estimate is less demanding than the lower estimate. 
In fact, after establishing the upper estimate in the Finsler setting, 
we will see that it also holds true in metric measure spaces 
under mild assumptions in Subsection~\ref{subsec:MM}. 

%%%%%
\subsection{Finsler case}\label{ssc:U_Fin}%%%%%
%%%%%

Let $(\M,F,\m)$ be a complete measured Finsler manifold such that $\Lambda_F <\infty$ 
equipped with a measure $\m$. 

\begin{prop}\label{pr:U}
For any measurable sets $A, B \subset \M$ with $0<\m(A),\m(B)<\infty$ and $t>0$, we have 
\[ \P_t(A, B) \le \sqrt{\mssm(A)}\sqrt{\mssm(B)} \exp\biggl( -\frac{\bar\d_\m(A,B)^2}{2t} \biggr) \fstop \]
In particular,
\[ \limsup_{t \downarrow 0}t\log \P_t(A, B) \le -\frac{\bar\d_\m(A,B)^2}{2} \fstop \]
\end{prop}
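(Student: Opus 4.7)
The plan is to execute the Davies--Gaffney exponential-perturbation method in the non-linear setting, using only the linearity and Leibniz rule of $\dif$ together with the pointwise duality inequality $-\alpha(v) \le F^*(-\alpha) F(v)$ in place of the missing Leibniz rule for $\nabla$. Fix $R > 0$, $\lambda > 0$, and $f \in \L_{B,R}$; set $g_s := \T_s \1_B$ and
\begin{equation*}
J(s) := \int_\M e^{2\lambda f} g_s^2 \diff\m.
\end{equation*}
Since $f \equiv 0$ on $B$ and $g_0 = \1_B$, we have $J(0) = \m(B)$, and since $f \le R$, the weight $e^{2\lambda f}$ is bounded, so all integrals in play are finite. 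The heart of the proof is the differential inequality $J'(s) \le \lambda^2 J(s)$, which by Gr\"onwall yields $J(t) \le \m(B)\, e^{\lambda^2 t}$.

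To derive this, I differentiate under the integral using $\partial_s g_s = \tfrac{1}{2}\Delta g_s$ and apply the distributional definition of $\Delta$ to the test function $\phi := e^{2\lambda f} g_s$. Because $\dif$ is linear and satisfies the Leibniz rule, this gives
\begin{equation*}
J'(s) = -\int_\M e^{2\lambda f} F^*(\dif g_s)^2 \diff\m - 2\lambda \int_\M e^{2\lambda f} g_s\, (\dif f)(\nabla g_s) \diff\m,
\end{equation*}
where the identity $(\dif g_s)(\nabla g_s) = F^*(\dif g_s)^2$ produces the first term. Since $g_s \ge 0$ by positivity preservation and $-(\dif f)(\nabla g_s) \le F^*(-\dif f)\, F(\nabla g_s) \le F^*(\dif g_s)$ pointwise (using $F^*(-\dif f) \le 1$ and $F(\nabla g_s) = F^*(\dif g_s)$), the second term is bounded above by $2\lambda \int_\M e^{2\lambda f} g_s F^*(\dif g_s) \diff\m$. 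Applying Cauchy--Schwarz in $L^2(e^{2\lambda f}\diff\m)$ followed by the elementary $2ab \le a^2 + b^2$ then exactly absorbs the dissipation term and delivers $J'(s) \le \lambda^2 J(s)$.

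For the final estimate, I decompose $\P_t(A,B) = \int_\M (\1_A e^{-\lambda f})(e^{\lambda f} g_t) \diff\m$ and apply Cauchy--Schwarz:
\begin{equation*}
\P_t(A,B) \le \bigl(\m(A)\, e^{-2\lambda \essinf_A f}\bigr)^{1/2} J(t)^{1/2} \le \sqrt{\m(A)\m(B)}\, \exp\biggl(\frac{\lambda^2 t}{2} - \lambda\, \essinf_A f\biggr).
\end{equation*}
Optimizing by $\lambda = (\essinf_A f)/t$ produces the exponent $-(\essinf_A f)^2/(2t)$. Taking $f = \bar\d_B \wedge R$ and noting $\essinf_A(\bar\d_B \wedge R) = \min(\bar\d_\m(A,B), R) \uparrow \bar\d_\m(A,B)$ as $R \to \infty$ (the first identity being Proposition~\ref{pr:dAB}) yields the stated inequality; the $\limsup$ assertion is immediate.

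The principal technical obstacle is justifying the formal differentiation of $J$, since $\1_B \notin H^1_0(\M)$. I would first prove $J'(s) \le \lambda^2 J(s)$ on $[s_0, t]$ for $s_0 > 0$, relying on $g_s \in H^1_0(\M)$ for $s > 0$ (the regularising effect recalled in Subsection~\ref{ssc:Finsler}) and extending the identity $\int \phi\, \Delta u\, \diff\m = -\int \dif\phi(\nabla u)\, \diff\m$ from $\phi \in C_c^\infty(\M)$ to $\phi = e^{2\lambda f} g_s \in H^1_0 \cap L^\infty$ by density (legitimate because $f$ is bounded and $F^*(\pm\dif f)$ are bounded a.e., since $F^*(\dif f) \le \Lambda_F F^*(-\dif f) \le \Lambda_F < \infty$). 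Sending $s_0 \downarrow 0$ and invoking the $L^2$-continuity of $s \mapsto g_s$ then completes the argument.
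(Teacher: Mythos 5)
Your proof is correct and follows essentially the same route as the paper's: both are the Davies--Gaffney exponential-weight argument, differentiating $J(s)=\int e^{2\lambda f}(\T_s\1_B)^2\diff\m$, using the one-sided duality $-\dif f(\nabla g)\le F^*(-\dif f)F(\nabla g)$ in place of the (unavailable) Leibniz rule for $\nabla$, completing the square to get $J'\le\lambda^2 J$, and optimizing $\lambda$. The only genuine divergence is technical: where the paper localizes via the cutoffs $\chi_k$ and passes to the limit in $k$ to justify the integration by parts, you argue directly that $e^{2\lambda f}g_s\in H^1_0(\M)\cap L^\infty(\M)$ (using $F^*(\pm\dif f)$ bounded, $g_s\in H^1_0$, $\Lambda_F<\infty$, and the completeness giving $H^1_0=H^1$) and extend the distributional identity $\int\phi\,\Delta u\,\diff\m=-\int\dif\phi(\nabla u)\,\diff\m$ from $C_c^\infty$ to such $\phi$ by density. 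This is a slight streamlining and requires no separate justification that the localized energies are uniformly bounded in $k$; it relies a bit more explicitly on $H^1_0=H^1$, which the paper also uses. Your final step ($f=\bar\d_B\wedge R$, $R\to\infty$) and the paper's (taking $f=\bar\d_B\wedge\bar\d_\m(A,B)$ directly) are interchangeable since $\bar\d_\m(A,B)<\infty$.
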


\begin{proof}
Given $R>0$, take $f \in \L_{B, R}$ and set $f_k:=f \chi_k \in H^1_0(\M)$ for $k \in \N$, 
with $\chi_k$ chosen in Section~\ref{sc:max}. 
We also set $u_t:=\T_t\1_B \in H^1_0(\M)$ and $u_{t, k} :=u_t \chi_k\in H^1_0(\M)$. 
Note that $0 \le u_t \le 1$ a.e.\ (see, e.g., \cite[Lemma 13.13]{Oht21}). 
For $\alpha \ge 0$ (chosen later), we consider the function
\[ \xi(t) := \int_\M (e^{\alpha f}u_t)^2 \diff \mssm \fstop \]
By the heat equation and the integration by parts, we have 
\begin{align} \label{e:KW0}
\xi'(t)
&= \int_\M e^{2\alpha f}u_t \Delta u_t \diff \mssm
 = \lim_{k \to \infty} \int_\M e^{2\alpha f_k}u_{t, k} \Delta u_t \diff \mssm
\\
&=-\lim_{k \to \infty}\int_\M \dif (e^{2\alpha f_k}u_{t, k})(\nabla u_t)  \diff \mssm \notag \fstop
\end{align}
Let us now see that 
\begin{align}\label{e:KW}
\lim_{k \to \infty}\int_\M \dif (e^{2\alpha f_k}u_{t, k})(\nabla u_t)  \diff \mssm
 = \lim_{k \to \infty}\int_\M \dif (e^{2\alpha f_k}u_{t, k})(\nabla u_{t, k})  \diff \mssm  \fstop
\end{align}
Note first that
\begin{align*}%\label{e:KW1}
&\biggl| \int_\M \dif (e^{2\alpha f_k}u_{t, k})(\nabla u_t)  \diff \mssm
 -\int_\M \dif (e^{2\alpha f_k}u_{t, k})(\nabla u_{t, k})  \diff \mssm \biggr| \\
&\le \Lambda_F \int_\M F^* \bigl( \dif (e^{2\alpha f_k}u_{t, k}) \bigr) F(\nabla u_t- \nabla u_{t, k}) \diff \mssm \\
&\le \Lambda_F \sqrt{2\E (e^{2\alpha f_k}u_{t, k})} \cdot \| F(\nabla u_t- \nabla u_{t, k}) \|_{L^2} \fstop
\end{align*}
We find from $u_t \in H^1_0(\M)$, 
\[ F^*(-\dif f_k) \le F^*(-f \dif \chi_k) +F^*(-\chi_k \dif f) \le R+1 \quad \text{a.e.} \]
and $\Lambda_F<\infty$ that $\E(e^{2\alpha f_k}u_{t, k})$ is bounded above uniformly in $k$. 
Moreover, since $\dif u_t = \dif u_{t, k}$ on $B_k$, we observe 
\begin{align*}
\| F(\nabla u_t- \nabla u_{t, k}) \|_{L^2}^2 
&\le \int_{\M \setminus B_k} \bigl( F(\nabla u_t) +\Lambda_F F(\nabla u_{t,k}) \bigr)^2 \diff \m \\
&\le \int_{\M \setminus B_k} \Bigl( F(\nabla u_t)
 +\Lambda_F \bigl( u_t F^*(\dif \chi_k) +\chi_k F^*(\dif u_t) \bigr) \Bigr)^2 \diff \m \\
&\le \int_{\M \setminus B_k} \bigl( \Lambda_F^2 u_t +(\Lambda_F +1)F(\nabla u_t) \bigr)^2 \diff \m
 \,\xrightarrow{k\to\infty}\, 0 \fstop 
\end{align*}
Therefore, \eqref{e:KW} has been shown and the RHS of \eqref{e:KW0} can be expanded as 
\begin{align}
&-\lim_{k \to \infty}\int_\M \dif (e^{2\alpha f_k}u_{t, k})(\nabla u_{t, k})  \diff \mssm \label{e:KW2}\\
&= -\lim_{k \to \infty}\int_\M e^{2\alpha f_k} \bigl( \dif u_{t, k}(\nabla u_{t, k})  +2\alpha u_{t, k} \,\dif f_k(\nabla u_{t, k})\bigr)\diff \mssm \fstop \nonumber
\end{align}
On $B_k$, it follows from $\chi_k \equiv 1$ and $f \in \L_{B,R}$ that $F^*(-\dif f_k) =F^*(-\dif f) \le 1$, 
thereby, 
\begin{align} \label{e:KW3}
-\dif f_k(\nabla u_{t,k}) \le F^*(-\dif f_k) F(\nabla u_{t,k}) \le F(\nabla u_{t,k}) \fstop
\end{align}
Combining \eqref{e:KW0}--\eqref{e:KW3}, we obtain 
\begin{align*}
\xi'(t) 
&= -\lim_{k \to \infty}\int_{\M} e^{2\alpha f_k} \bigl( \dif u_{t, k}(\nabla u_{t, k})
 +2\alpha u_{t, k} \,\dif f_k(\nabla u_{t, k})\bigr) \diff\mssm \\
&\le -\lim_{k \to \infty}\int_{\M} e^{2\alpha f_k} 
 \bigl( F(\nabla u_{t,k})^2  -2\alpha u_{t,k} F(\nabla u_{t,k}) \bigr) \diff\mssm 
\\
&\le \lim_{k \to \infty}\int_\M e^{2\alpha f_k} (\alpha u_{t, k})^2 \diff\mssm
\\
&= \alpha^2 \int_\M e^{2\alpha f} u_t^2\diff \mssm
= \alpha^2 \xi(t) \comma
\end{align*}
which yields 
\[ \xi(t) \le \xi(0)e^{\alpha^2 t} \fstop \]

Now, we take $f= \bar\d_B\wedge \bar\d_\m(A,B) \in \L_{B,\bar\d_\m(A,B)}$. 
Since $\bar\d_B =0$ a.e.\ on $B$ and $\bar\d_B \ge \bar\d_\m(A,B)$ a.e.\ on $A$ by Proposition~\ref{pr:dAB}, 
we have 
\begin{align*}
\P_t(A, B) &= \int_\M u_t \1_A \diff \mssm \le \|e^{\alpha f} u_t\|_{L^2} \|e^{-\alpha f}\1_A\|_{L^2} 
\le \sqrt{\xi(0)} e^{\alpha^2 t/2} \sqrt{\mssm(A)} e^{-\alpha \bar\d_\m(A,B)} 
\\
&= \sqrt{\mssm(A)}\sqrt{\mssm(B)} \exp \biggl( \frac{\alpha^2 t}{2} -\alpha \bar\d_\m(A,B) \biggr) \fstop
\end{align*}
Choosing the optimal value $\alpha =\bar\d_\m(A,B)/t$, we conclude 
\begin{align*}%\label{ineq:UBM}
\P_t(A, B) \le \sqrt{\mssm(A)}\sqrt{\mssm(B)} \exp \biggl( -\frac{\bar\d_\m(A, B)^2}{2t} \biggr) \fstop 
\end{align*}
\end{proof}

Set $\Phi_t :=\Phi(-t\log \T_t \1_B)$ for $t>0$ and $\Phi$ as in Subsection~\ref{ssc:cut}. 
Since $\Phi$ is bounded, for any finite measure $\nu$ mutually absolutely continuous with $\m$, 
$(\Phi_t)_{t>0}$ is uniformly bounded and hence weakly relatively compact in $L^2(\nu)$. 
The following corollary will play a role in the lower estimate.

\begin{cor}\label{c:1}
For any measurable set $B \subset \M$ with $0<\m(B)<\infty$ 
and any weak $L^2(\nu)$-limit $\Phi_0$ of $(\Phi_t)_{t>0}$ as $t \to 0$, we have 
\[ \Phi_0 \ge \Phi \biggl( \frac{\bar\mssd_B^2}{2} \biggr) \quad \text{a.e.} \]
\end{cor}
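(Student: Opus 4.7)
The plan is to reduce the pointwise a.e.\ claim to an integrated inequality tested against arbitrary measurable subsets, use Proposition~\ref{pr:U} combined with Chebyshev's inequality to lower bound $-t\log\T_t\1_B$ on the super-level sets of $\bar\mssd_B$, and then recover the desired pointwise inequality by varying the test set.

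First I would fix $c>0$ and pick an arbitrary measurable $A\subset\{\bar\mssd_B>c\}$ with $0<\mssm(A)<\infty$ (finite mass can always be arranged by intersecting with an exhausting ball $B_k$ from Section~\ref{sc:max}). Proposition~\ref{pr:dAB} gives $\bar\mssd_\mssm(A,B)\ge c$, and Proposition~\ref{pr:U} then supplies the exponential upper bound
\[ \int_A \T_t\1_B \diff\mssm \le \sqrt{\mssm(A)\mssm(B)}\,\exp\biggl(-\frac{c^2}{2t}\biggr) \fstop \]
For each $\alpha\in(0,c^2/2)$ I would introduce the ``bad'' set $A_t^\alpha:=\{x\in A:-t\log\T_t\1_B(x)<\alpha\}=\{x\in A:\T_t\1_B(x)>e^{-\alpha/t}\}$ and apply Chebyshev's inequality to conclude $\mssm(A_t^\alpha)\to 0$ as $t\downarrow 0$. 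By the mutual absolute continuity of $\nu$ and $\mssm$ together with the finiteness of $\nu$, a subsequence extraction and dominated convergence upgrade this to $\nu(A_t^\alpha)\to 0$. On the complement $A\setminus A_t^\alpha$ the monotonicity of $\Phi$ yields $\Phi_t\ge\Phi(\alpha)$; testing the weak convergence $\Phi_t\rightharpoonup\Phi_0$ against $\1_A\in L^2(\nu)$ and sending $\alpha\uparrow c^2/2$ (using continuity of $\Phi$) gives
\[ \int_A \Phi_0\diff\nu \ge \Phi\bigl(c^2/2\bigr)\,\nu(A) \quad\text{for every admissible } A\subset\{\bar\mssd_B>c\} \fstop \]
This integrated inequality forces $\Phi_0\ge\Phi(c^2/2)$ $\nu$-a.e.\ on $\{\bar\mssd_B>c\}$, and letting $c$ range over a countable dense subset of the positive reals and once more invoking the monotonicity and continuity of $\Phi$ yields $\Phi_0\ge\Phi(\bar\mssd_B^2/2)$ $\nu$-a.e.\ (hence $\mssm$-a.e.), the case $\bar\mssd_B(x)=0$ being trivial since $\Phi_0\ge 0$.

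The main technical friction I anticipate is the mismatch between the $\mssm$-measure control delivered by Proposition~\ref{pr:U} and the $\nu$-measure control needed by the weak-$L^2(\nu)$ framework; this is precisely the point where the mutual absolute continuity of $\nu$ and $\mssm$ and a subsequence extraction must be combined. Working along a subsequence is harmless here because the conclusion is required for an arbitrary fixed weak limit $\Phi_0$.
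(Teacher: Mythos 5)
Your argument is correct and follows essentially the same route as the paper, which simply defers to \cite[Lemma~2.9]{HinRam03}: a Chebyshev estimate on super-level sets of $\bar\mssd_B$ obtained from Propositions~\ref{pr:dAB} and \ref{pr:U}, tested against the weak $L^2(\nu)$-convergence and then sent through a limit in $c$. One minor simplification: since $\nu$ is finite and $\nu\ll\mssm$, the implication $\mssm(A_t^\alpha)\to 0\Rightarrow\nu(A_t^\alpha)\to 0$ follows directly from the $\epsilon$--$\delta$ form of absolute continuity, so no subsequence extraction is needed there.
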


\begin{proof}
We can follow the lines of \cite[Lemma~2.9]{HinRam03}
thanks to Propositions~\ref{pr:dAB}, \ref{pr:U}.
\end{proof}

%%%%%
\subsection{Nonsmooth case}\label{subsec:MM}%%%%%
%%%%%

In this subsection, we briefly explain that the upper estimate (Proposition \ref{pr:U}) 
can be generalised to the nonsmooth setting of metric measure spaces 
by utilising the differential calculus developed in \cite{AmbGigSav14, Gig18}. 

\paragraph{Setting}%%%%%
Let $(\X, \d)$ be a complete separable metric space (with a usual symmetric distance function), 
and $\m$ be a fully supported Borel measure on $\X$ such that $\m(B)<\infty$ 
for every bounded $\m$-measurable set $B \subset \X$. 
A Borel probability measure $\pi$ on the set $C([0,1], \X)$ 
of continuous curves $\gamma:[0,1] \to \X$ is called a \emph{test plan} 
if there is a constant $C>0$ such that $(e_t)_{\#} \pi \le C \m$ for all $t \in [0,1]$ and 
\[ 
 \int_{C([0,1], \X)} \int_0^1 |\dot{\gamma}_t|^2 \diff t \,\pi(\diff\gamma) <\infty \comma 
\]
where $e_t(\gamma):=\gamma_t$ is the evaluation map, 
$(e_t)_{\#} \pi$ denotes the push-forward of $\pi$ by $e_t$, 
and $|\dot{\gamma}_t|:=\lim_{s \to t}\d(\gamma_s,\gamma_t)/|s-t|$ is the metric speed. 

The \emph{Sobolev class} $S^{2}(\X)$ consists of measurable functions $f$ 
such that there is a nonnegative function $g \in L^2(\X)$ satisfying
\[ 
\int_{C([0,1], \X)} |f(\gamma_1)-f(\gamma_0)| \,\pi(\diff\gamma)
 \le \int_{C([0,1], \X)} \int_{0}^1 g(\gamma_t)|\dot{\gamma}_t| \diff t \,\pi(\diff\gamma) 
\]
for all test plans $\pi$. 
The minimal function $g$ is called the \emph{minimal weak upper gradient} and denoted by $|\Dif f|$. 
We define the \emph{Sobolev space} $W^{1,2}(\X) :=S^{2}(\X) \cap L^2(\X)$ equipped with the norm
\[ \|f\|_{W^{1,2}} :=\sqrt{\|f\|_{L^2}^2 + \bigl\| |\Dif f| \bigr\|_{L^2}^2} \fstop \]
(Denoting the Sobolev space by $W^{1,2}(\X)$ follows the notation in \cite{AmbGigSav14,Gig18}, 
while in the Finsler setting we use $H^1(\M), H^1_0(\M)$ as in \cite{Oht21}.)
We remark that $W^{1,2}(\X)$ is not necessarily separable, reflexive, nor a Hilbert space in this generality 
(see \cite{ACDM}, \cite[Theorem~2.1.5]{Gig18}). 

\paragraph{Heat semigroup}%%%%%
The \emph{Cheeger energy} $\Ch: L^2(\X) \to [0, \infty]$ is defined as 
\[%\label{eq:Ch}
\Ch(f) := \frac{1}{2}\int_{\X} |\Dif f|^2 \diff \m 
\]
for $f \in W^{1,2}(\X)$, and $\Ch(f):=\infty$ otherwise. 
Note that $\Ch$ is convex, lower semi-continuous and the domain $W^{1,2}(\X)$ is dense in $L^2(\X)$. 
For $f \in W^{1,2}(\X)$ such that the sub-differential $\partial^- \Ch(f) \subset L^2(\X)$ is nonempty,
$\Delta f \in L^2(\X)$ is defined as $\Delta f := -h$, 
where $h$ is the element of minimal $L^2$-norm in $\partial^-\Ch(f)$. 
Note that this Laplacian is not necessarily linear. 

Thanks to the theory of gradient flows for convex functions on Hilbert spaces 
(we refer to \cite{AmbGigSav08}), 
we have the \emph{heat semigroup} $(\T_t)_{t \ge 0}$ of continuous operators from $L^2(\X)$ to itself 
such that, for every $f \in L^2(\X)$, $t \mapsto \T_t f \in L^2(\X)$ is continuous on $[0, \infty)$, 
absolutely continuous on $(0, \infty)$ and 
\[ 
\frac{\diff}{\diff t} \T_t f =\frac{1}{2} \Delta \T_t f \quad \text{a.e.\ $t>0$} \fstop 
\]

\paragraph{Differentials and gradients}%%%%%
According to \cite[Definition~2.2.1]{Gig18}, 
there exists a dual pair of Banach spaces called the \emph{tangent module} and the \emph{cotangent module}:
\[ 
\bigl( L^2(T\X), \|\cdot\|_{L^2(T\X)} \bigr) \comma \qquad
 \bigl( L^2(T^*\X), \|\cdot\|_{L^2(T^*\X)} \bigr) \fstop 
\]
These spaces are endowed with maps
%structures, 
%namely they are $L^\infty(\X)$-premodules and there exist maps 
$|\cdot|: L^2(T\X) \to L^2(\X)$, $|\cdot|_*: L^2(T^*\X) \to L^2(\X)$ such that 
\[
 \bigl\| |v| \bigr\|_{L^2}=\|v\|_{L^2(T\X)} \,\ \text{for}\ v \in L^2(T\X) \comma \qquad
 \bigl\| |\omega|_* \bigr\|_{L^2}=\|\omega\|_{L^2(T^*\X)} \,\ \text{for}\ \omega \in L^2(T^*\X) \fstop 
\]
There exist a linear operator $\dif: S^2(\X) \to L^2(T^*\X)$ 
and a (not necessarily linear) multi-valued operator ${\rm Grad}: S^2(\X) \to L^2(T\X)$ satisfying 
\[
 \dif f(v) = |v|^2 = |{\dif} f|_*^2 =|\Dif f|^2 \quad\ \text{for}\ v \in {\rm Grad} f \comma f \in S^2(\X) \fstop
\]
%for $$. 

We say that $(\X, \d,\m)$ is \emph{infinitesimally strictly convex} if 
${\rm Grad}(f)$ consists of exactly one element, which is denoted by $\nabla f$ 
(see, e.g., \cite[Definition~2.3.9]{Gig18}). 
In this case, we have 
\[
\Ch(f) = \frac{1}{2}\int_{\X} \dif f(\nabla f) \diff \m, \quad\ f \in W^{1,2}(\X) \fstop
\]
The \emph{differential} operator $\dif$ enjoys the locality as well as the Leibniz and chain rules 
in an appropriate sense. 
On the other hand, the \emph{gradient} operator $\nabla$ satisfies the locality and the chain rule, 
whereas the Leibniz rule does not hold. 

\paragraph{Upper estimate}%%%%%
Define $W^{1,2}_{\loc}(\X)$ as the set of functions $f$ admitting $(f_k)_{k \in \N} \subset W^{1,2}(\X)$ 
such that $f=f_k$ on $B_k$ (with $B_k$ as in Section~\ref{sc:max}; 
$W^{1,2}_{\loc}(\X)$ does not depend on the choice of $B_k$). 
Then we define $\bar\mssd_\mssm(A, B)$ as in~\eqref{eq:d_m} with
\[
 \L:=\{f \in W^{1,2}_{\loc}(\X) \cap L^\infty(\X): |\Dif f| \le 1\ \text{a.e.} \} \comma 
\]
and the \emph{local Sobolev-to-Lipschitz property} means the property described in Proposition~\ref{pr:StL}: 
for any $f \in \L$, there exists a bounded $1$-Lipschitz function $\tilde{f}$ 
such that $f = \tilde{f}$ $\m$-a.e. 

Now, Theorem~\ref{t:m2} is proved in the same way as Proposition~\ref{pr:U} with $\bar\mssd_B$ and $\bar\mssd_\m(A, B)$ replaced by $\mssd_B$ and $\mssd(A, B)$, respectively, to obtain the upper bound 
\begin{align}\label{ineq:UBM}
\P_t(A, B) \le \sqrt{\mssm(A)}\sqrt{\mssm(B)} \exp \biggl( -\frac{\d(A, B)^2}{2t} \biggr) \fstop 
\end{align}
The infinitesimal strict convexity is used to have the integration by parts formula in the third equality in \eqref{e:KW0} in the generality of metric measure spaces.
We remark that $W^{1,2}(\X)$ is not necessarily a Hilbert space, therefore, the reflexivity is not automatic.
If $W^{1,2}(\X)$ is reflexive, then  the proof of Proposition~\ref{pr:dAB} works verbatim to construct $\bar\mssd_\mssm(A, B)$ based on $\mathbb L$ defined above in metric measure spaces, and we have
\begin{align*}%\label{ineq:UBM1}
\P_t(A, B) \le \sqrt{\mssm(A)}\sqrt{\mssm(B)} \exp \biggl( -\frac{\bar\d_\mssm(A, B)^2}{2t} \biggr) \fstop 
\end{align*} 
If, furthermore, the local Sobolev-to-Lipschitz property holds, 
the same proof of Lemma~\ref{lm:CDD} applies to obtain $\bar\mssd_\mssm(A, B)=\mssd(A, B)$ for any open sets $A, B \subset \X$. 

Concerning the lower estimate in the next section, 
our argument is applicable up to Subsection~\ref{ssc:limit}, 
whereas the use of a linearised heat semigroup in Lemma~\ref{l:215} prevents us from applying the same proof to the non-smooth case. 

\begin{rem}\label{rm:asymm}
The nonsmooth calculus in this subsection is not yet generalised to the asymmetric setting. 
We refer to \cite{KriZha22} for a related study of the curvature-dimension condition 
in asymmetric metric measure spaces. 
\end{rem}

\begin{rem}[The case without the infinitesimal strict convexity]\label{rm:stability}
The upper bound~\eqref{ineq:UBM} can be extended to some spaces without the infinitesimal strict convexity by using the stability under a perturbation of metric measure spaces.
For instance, take $\X=\R^n$, $\mssm$ as the Lebesgue measure, and $\mssd_\e(x,y)^2:=\|x-y\|_{p}^2+\e\|x-y\|_{2}^2$, where $\|x\|_p^p:=\sum_{i=1}^n|x_i|^p$ when $1 \le p <\infty$, and $\|x\|_\infty:=\max_{1 \le i \le n}|x_i|$.
As $(\R^n, \mssd_\e)$ is a normed space for every $\e>0$, the metric measure space $\X_\e:=(\R^n, \mssd_\e, \mssm)$ is a $\CD(0, n)$ space for every $\e>0$ (see Theorem in \cite[p.~908]{Vil09}).
Furthermore, $(\R^n, \mssd_\e)$ is infinitesimally strictly convex for every $\e>0$.
It is easy to see that $\X_\e$ converges to $\X=(\R^n, \d, \mssm)$ as $\e \to 0$ in the sense of the pointed measured Gromov--Hausdorff convergence, where $\d$ is induced from $\|\cdot\|_p$. 
Thus, by \cite[(1.5)]{GigMonSav15}, the corresponding heat semigroup $\T^\e_t f$ converges to $\T_t f$ strongly in $L^2(\R^n, \mssm)$ for every $f \in L^2(\R^n, \mssm)$ and $t>0$.
Noting that 
\[
\lim_{\e \to 0}\mssd_\e(A, B)=\inf_{\e>0}\inf_{x \in A,\, y \in B}\sqrt{\|x-y\|_p^2+\e\|x-y\|_2^2}  = \mssd(A, B) \comma
\]
we obtain 
\begin{align*}%\label{ineq:UBM}
\P_t(A, B) 
&= \int_{\R^n}\1_A \cdot \T_t \1_B \diff \mssm 
 = \lim_{\e \to 0}\int_{\R^n}\1_A \cdot \T^\e_t \1_B \diff \mssm 
\\
&\le \lim_{\e \to 0} \sqrt{\mssm(A)}\sqrt{\mssm(B)} \exp \biggl( -\frac{\d_\e(A, B)^2}{2t} \biggr) 
\\
&=\sqrt{\mssm(A)}\sqrt{\mssm(B)} \exp \biggl( -\frac{\d(A, B)^2}{2t} \biggr)
\fstop 
\end{align*}
This proves the upper bound in Theorem \ref{t:m2} for $(\R^n, \d, \mssm)$ ($1 \le p \le \infty$), although the infinitesimal strict convexity fails for $p=1,\infty$.
For general metric measure spaces, however, it is open whether we can remove the infinitesimal strict convexity in Theorem~\ref{t:m2}. 
\end{rem}

\section{Lower estimate} \label{sec:L}%%%%%
%%%%%%%%%%

This last section is devoted to the lower estimate of~\eqref{eq:dAB}: 
\begin{equation}\label{LB}
\liminf_{t \downarrow 0} t\log \P_t(A, B) \ge -\frac{\bar\d_\m(A,B)^2}{2} 
\end{equation}
for measurable sets $A,B \subset \M$ with $0<\m(A),\m(B)<\infty$.  Let $(\M,F,\m)$ be a complete Finsler manifold with $\Lambda_F <\infty$ with a measure $\m$. 
We will need the additional assumptions $\mathsf{C}_F,\mathsf{S}_F <\infty$ 
and $\m(\M)<\infty$ only in a later step in Subsections~\ref{ssc:Tauber}, \ref{ssc:last}. 

\subsection{Outline of the proof}\label{ssc:outline}%%%%%
%%%%%

Let us first remark that it is sufficient to show \eqref{LB} in the case of $\P_t(A,B)<1$. 
This is seen by replacing $\m$ with $c\m$ for $c>0$ such that $c\m(A)<1$. 
Indeed, $(\M,F,c\m)$ has the same heat flow as $(\M,F,\m)$, thereby $\P_t^{c\m}(A,B)=c\P_t^\m(A,B)$, 
and clearly $\bar\d_{c\m}(A,B)=\bar\d_\m(A,B)$. 
Thus, we have $\P_t^{c\m}(A,B) \le c\m(A) <1$, and \eqref{LB} for $c\m$ implies that for $\m$. 

Given $\e>0$, we set 
\[ D_\e := \{x \in A : \bar\d_B(x) \le \bar\d_\m(A,B) + \e\} \comma \] 
and observe $\m(D_\e)>0$ by Proposition~\ref{pr:dAB}. 
Then, recalling $\Phi_t =\Phi(-t\log \T_t \1_B)$ and noting $\log\P_t(A,B)<0$,
%for small $t>0$, 
we have
\begin{align} \label{LB1}
\limsup_{t \downarrow 0}\Phi\bigl(-t\log \P_t(A, B)\bigr) 
& \le \limsup_{t \downarrow 0}\Phi\bigl(-t\log \P_t(D_\e, B)\bigr)  
\\
&= \limsup_{t \downarrow 0}\Phi\biggl(-t\log \biggl[ \frac{1}{\mssm(D_\e)} \int_{D_\e} \T_t\1_B \diff \mssm \biggr] \biggr) \notag
\\
&\le  \limsup_{t \downarrow 0} \frac{1}{\mssm(D_\e)} \int_{D_\e}  \Phi(-t\log \T_t\1_B ) \diff \mssm \notag
\\
&=  \limsup_{t \downarrow 0} \frac{1}{\mssm(D_\e)} \int_{D_\e}  \Phi_t \diff \mssm \comma \notag
\end{align}
where the latter inequality is derived from Jensen's inequality 
since the function $s \mapsto \Phi(-t\log s)$ is convex on $[0,1]$ for sufficiently small $t>0$  
(see \cite[Lemma~2.1]{HinRam03}). 
Now, suppose that the following inequality holds:
\begin{align}\label{eq:G}
\limsup_{t \downarrow 0} \int_{D_\e}  \Phi_t \diff \mssm
 \le \int_{D_\e} \Phi\biggl(\frac{\bar\mssd_B^2}{2}\biggr) \diff \mssm \fstop
\end{align}
Then, we can continue the estimation in \eqref{LB1} as, with the help of \eqref{e:estimates} to see $\Phi(t) \le t$, 
\begin{align*}
 \le \frac{1}{\mssm(D_\e)} \int_{D_\e}  \Phi\biggl(\frac{\bar\mssd_B^2}{2}\biggr) \diff \mssm 
 \le \frac{1}{\mssm(D_\e)} \int_{D_\e}  \frac{\bar\mssd_B^2}{2} \diff \mssm 
 \le \frac{(\bar\d_\m(A,B)+\e)^2}{2} \fstop
\end{align*}
Since $\e>0$ was arbitrary, $\Phi$ is non-decreasing and $\Phi(t)=\Phi^K(t) \to t$ as $K \to \infty$, 
we conclude \eqref{LB}. 

The purpose of the rest of the section is to show \eqref{eq:G}, which completes the proof of~\eqref{LB}. 
In fact, at the end of the section, we will prove that equality holds (see \eqref{eq:HR2.22}). 

\subsection{Uniform bounds}\label{ssc:uniform}%%%%%
%%%%%

For $\delta \in (0,1)$ and $t>0$, we put 
\[ u_t^\delta := -t\log\bigl( (1-\delta)\T_t\1_B + \delta\bigr) \comma \qquad
 e^\delta_t:=-t\log \delta \fstop \]
Note that $u_t^{\delta} \ge 0$ since $0 \le \T_t \1_B \le 1$.
In addition, for $\varphi$, $\Phi$ and $\Psi$ in Subsection~\ref{ssc:cut}, we will abbreviate as 
$\varphi_t^\delta :=\varphi(u_t^\delta)$, $\Phi_t^\delta :=\Phi(u_t^\delta)$ and $\Psi_t^\delta :=\Psi(u_t^\delta)$.

We will denote by $(\cdot,\cdot)_{L^2}$ both the $L^2$-inner product 
and the paring between a one-form and a vector field with respect to $\m$. 
We also introduce the following notation: 
For a nonnegative function $\rho$, define 
\[ \E_{\rho}(f) :=\frac{1}{2} \int_\M F^*(\dif f)^2 \rho \diff\m \comma \qquad
 (f_1,f_2)_{L^2(\rho)} :=\int_\M f_1 f_2 \rho \diff\m \fstop \]

\begin{lem}\label{l:EQH}
For any $t>0$, we have $u_t^\delta-e^\delta_t \in H^1_0(\M)$ and, 
for every bounded nonnegative function $\rho \in H^1_0(\M) \cap L^1(\M)$, 
\[ \partial_t (\rho, \Phi^\delta_t)_{L^2} 
 = \frac{1}{t}(\rho, \Psi_t^\delta)_{L^2}
 +\frac{1}{2} \Bigl(\dif \bigl( \varphi'(u^{\delta}_t)^2\rho\bigr), \nabla (-u_t^\delta)\Bigr)_{L^2}
 -\frac{1}{t}\E_{\varphi'(u^{\delta}_t)^2\rho} (-u_t^\delta) \fstop \]
\end{lem}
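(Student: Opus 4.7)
The key is to set $v_t := (1-\delta)\T_t\1_B + \delta$, so that $u_t^\delta = -t\log v_t$ with $\delta \le v_t \le 1$ by the $L^\infty$-contraction of heat flow. The regularising effect recalled in Subsection~\ref{ssc:Finsler} gives $\T_t\1_B \in H^1_0(\M)$, and since $x \mapsto -t\log(x/\delta)$ is smooth and Lipschitz on $[\delta,1]$, the chain rule in Remark~\ref{rm:nabla} places $u_t^\delta - e_t^\delta$ in $H^1_0(\M)$. The positive $1$-homogeneity $\mathcal{L}^*(c\alpha) = c\mathcal{L}^*(\alpha)$ for $c \ge 0$ from Remark~\ref{rm:nabla} then gives $\nabla v_t = (1-\delta)\nabla\T_t\1_B$, so $v_t$ itself satisfies the heat equation $\partial_t v_t = \tfrac{1}{2}\Delta v_t$. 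Differentiating $u_t^\delta = -t\log v_t$ in time yields the pointwise identity
\[
\partial_t u_t^\delta = \frac{u_t^\delta}{t} - \frac{t}{2 v_t}\Delta v_t.
\]

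Substituting this into $\partial_t(\rho,\Phi_t^\delta)_{L^2} = (\rho\varphi'(u_t^\delta)^2, \partial_t u_t^\delta)_{L^2}$, the first summand produces $\tfrac{1}{t}(\rho,\Psi_t^\delta)_{L^2}$ at once from $\Psi(s)=s\varphi'(s)^2$. The Laplacian summand is handled by integration by parts against the test function $\rho\varphi'(u_t^\delta)^2/v_t \in H^1_0(\M)$, converting it to $\tfrac{t}{2}(\dif(\rho\varphi'(u_t^\delta)^2/v_t),\nabla v_t)_{L^2}$. Two further moves complete the identification: first, combining $\dif v_t = -(v_t/t)\,\dif u_t^\delta$ with the positive homogeneity of $\mathcal{L}^*$ gives $\nabla v_t = (v_t/t)\nabla(-u_t^\delta)$, so the outer factor $v_t$ absorbs the $1/v_t$ inside the differential; second, the Leibniz rule for the \emph{linear} operator $\dif$ (never for the non-linear $\nabla$) splits the expression into $\tfrac{1}{2}(\dif(\rho\varphi'(u_t^\delta)^2),\nabla(-u_t^\delta))_{L^2}$ plus a remainder proportional to $\int \rho\varphi'(u_t^\delta)^2\,\dif u_t^\delta\bigl(\nabla(-u_t^\delta)\bigr)\diff\m$. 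The defining Legendre identity $\dif u_t^\delta(\nabla(-u_t^\delta)) = -F^*(-\dif u_t^\delta)^2$ then collapses this remainder to exactly $-\tfrac{1}{t}\E_{\varphi'(u_t^\delta)^2\rho}(-u_t^\delta)$, matching the claim.

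The main technical obstacle is justifying the integration by parts in the non-linear Finsler setting: the distributional Laplacian is defined a priori only against $C_c^\infty$ test functions, so one must approximate $\rho\varphi'(u_t^\delta)^2/v_t$ by compactly supported smooth functions via the cut-offs $\chi_k$ of Section~\ref{sc:max} and pass to the limit. The lower bound $v_t \ge \delta$, the boundedness of $\rho$ and $\varphi'$, and the assumption $\Lambda_F<\infty$ control all the relevant $L^2$-norms of $\dif u_t^\delta$ and $\dif v_t$, making this limiting argument routine. A secondary concern is differentiation under the integral in $t$, which follows from absolute continuity of $t \mapsto \T_t\1_B$ in $L^2$ combined with $v_t \in [\delta,1]$ and the uniform bounds in \eqref{e:estimates}. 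The guiding principle throughout is to express every manipulation via the linear operator $\dif$ and the scalar coupling $\dif(\cdot)(\nabla(\cdot))$, thereby bypassing the failure of the Leibniz rule for the non-linear $\nabla$ highlighted in \eqref{eq:asym}.
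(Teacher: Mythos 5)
Your proof is correct and follows essentially the same route as the paper's: the identity $\nabla v_t=(v_t/t)\nabla(-u_t^\delta)$ via positive homogeneity of $\mathcal{L}^*$, integration by parts against $\rho\varphi'(u_t^\delta)^2/v_t$, the Leibniz rule for $\dif$ only, and the Legendre identity $\dif u_t^\delta(\nabla(-u_t^\delta))=-F^*(-\dif u_t^\delta)^2$ are exactly the computations the authors perform. The only cosmetic difference is that the paper first derives the formula for a general test function $\rho$ and then substitutes $\varphi'(u_t^\delta)^2\rho$, whereas you carry the factor $\varphi'(u_t^\delta)^2$ along from the start.
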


\begin{proof}
We put $f:=(1-\delta)\1_B$ for brevity.
Observe that $u_t^\delta-e^\delta_t =\vartheta(\T_t f)$, 
where $\vartheta(s) :=-t\log((s+\delta)/\delta)$ is a Lipschitz function on $[0,\infty)$ with $\vartheta(0)=0$.
Thus, since $\T_t f \in H^1_0(\M)$, we have $u_t^\delta-e^\delta_t \in H^1_0(\M)$. 
We deduce from the chain rule for $\dif$ that 
\[%\label{eq:nabla}
\dif u_t^\delta =-\frac{t}{\T_t f+\delta} \dif \T_t f \comma \qquad 
 \nabla(-u_t^\delta) =\frac{t}{\T_t f+\delta} \nabla \T_t f \fstop 
\]
We remark that, to derive the latter equation, 
we needed $t/(\T_t f +\delta) >0$ because of the irreversibility of $F$. 
Combining this with the Leibniz rule for $\dif$, we have
\begin{align*}
\biggl(\dif \biggl[ \frac{\rho}{\T_t f +\delta} \biggr], \nabla \T_t f  \biggr)_{L^2}
&= \biggl( \frac{\dif\rho}{\T_t f+\delta}, \nabla \T_t f \biggr)_{L^2}
 -\biggl( \frac{\dif\T_t f}{(\T_t f+\delta)^2}, \nabla \T_t f \biggr)_{L^2(\rho)} \\
&= \frac{1}{t} \bigl( \dif\rho, \nabla(-u_t^\delta) \bigr)_{L^2}
 +\frac{1}{t^2}\bigl( \dif u_t^\delta, \nabla(-u_t^\delta) \bigr)_{L^2(\rho)} \fstop
\end{align*}
Hence, we obtain
\begin{align*}
(\rho, \partial_t u_t^\delta)_{L^2}
&= \frac{1}{t}(\rho, u_t^\delta)_{L^2} -\frac{t}{2} \biggl(\rho, \frac{\Delta \T_t f}{\T_t f +\delta}\biggr)_{L^2} 
\\
&= \frac{1}{t}(\rho, u_t^\delta)_{L^2} + \frac{t}{2}\biggl(\dif \biggl[ \frac{\rho}{\T_t f +\delta} \biggr], \nabla\T_t f\biggr)_{L^2}
\\
&=\frac{1}{t}(\rho, u_t^\delta)_{L^2} +\frac{1}{2} \bigl(\dif\rho, \nabla(-u_t^\delta) \bigr)_{L^2}
 - \frac{1}{t} \E_{\rho}(-u_t^\delta) \fstop
\end{align*}
Since $\partial_t (\rho, \Phi_t^\delta)_{L^2} =(\rho, \partial_t \Phi_t^\delta)_{L^2} =\bigl( \varphi'(u^{\delta}_t)^2 \rho, \partial_tu_t^\delta \bigr)_{L^2}$,
replacing $\rho$ with $\varphi'(u^{\delta}_t)^2 \rho$ in the above calculation completes the proof. 
\end{proof}

For a function $f:(0,\infty) \times \M \to \R$ (such as $(t,x) \mapsto \varphi_t^\delta(x)$), 
we will denote its time average by
\begin{align*} %\label{e:TA}
\bar{f}_t(x) :=\frac{1}{t} \int_0^t f_s(x) \diff s \comma 
\end{align*}
where $f_s(x):=f(s,x)$. 
The next lemma is a standard fact of the ($H^1_0(\M)$-valued) Bochner integral
(cf.~\cite[Lemma~2.5]{HinRam03}).

\begin{lem}\label{l:2.5}
Let $f:(0, T] \times \M \to \R$ be a bounded jointly measurable function 
such that $f_t \in H^1_0(\M)$ for all $t \in (0, T]$ and $\int_0^T \|f_t\|_{H^1}^2 \diff t <\infty$. 
Then, we have $\bar{f}_T \in H^1_0(\M)$ and 
\[ \E_{\rho}(\bar{f}_T) \le \frac{1}{T} \int_0^T \E_{\rho}(f_t) \diff t \]
for any bounded nonnegative function $\rho \in L^1_{\loc}(\M)$. 
\end{lem}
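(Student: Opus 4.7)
The plan is to view $\bar f_T$ as a Bochner integral of $t\mapsto f_t$ with values in the reflexive Banach space $H^1_0(\M)$, and then apply Jensen's inequality to the convex lower semicontinuous functional $\E_\rho$ on $H^1_0(\M)$.

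First I would check that $t \mapsto f_t \in H^1_0(\M)$ is Bochner integrable on $(0,T]$. Since $\M$ is $\sigma$-compact, $C^\infty_c(\M)$ is separable in the $H^1$-norm, and hence so is $H^1_0(\M)$; joint measurability of $f$ together with separability yields strong measurability of $t\mapsto f_t$ via Pettis's theorem. Cauchy--Schwarz combined with the hypothesis gives
\[ \int_0^T \|f_t\|_{H^1} \diff t \le \sqrt{T} \biggl( \int_0^T \|f_t\|_{H^1}^2 \diff t \biggr)^{1/2} <\infty, \]
so the Bochner integral $g:=\int_0^T f_t \diff t$ exists in $H^1_0(\M)$. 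Testing against continuous linear functionals of the form $h\mapsto \int_\M h\psi \diff\m$ for $\psi\in C^\infty_c(\M)$ (combined with Fubini) identifies $g$ with the pointwise time integral $\m$-a.e., and hence $T\bar f_T = g \in H^1_0(\M)$.

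Next I would verify that $\E_\rho$ is convex and lower semicontinuous on $H^1_0(\M)$. Convexity follows because $F^*$ is a norm on each cotangent fibre, so $F^*(\dif\emparg)$ is a seminorm on $H^1_0(\M)$, whence $F^*(\dif\emparg)^2$ is convex, and integrating against the nonnegative weight $\rho$ preserves convexity. Lower semicontinuity under strong convergence in $H^1_0(\M)$ (which yields $\dif f_n\to\dif f$ in $L^2(T^*\M)$) follows from Fatou's lemma applied to a subsequence converging pointwise a.e., using the boundedness of $\rho$. With these two properties in hand, Jensen's inequality for Bochner integrals in the Banach-valued setting gives
\[ \E_\rho(\bar f_T) = \E_\rho\!\left(\frac{1}{T}\int_0^T f_t\diff t\right) \le \frac{1}{T}\int_0^T \E_\rho(f_t)\diff t, \]
which is the desired estimate.

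The main (mild) obstacle is the matching between the Bochner integral in $H^1_0(\M)$ and the $\m$-a.e.\ pointwise time integral; this is standard once one knows $H^1_0(\M)$ is separable, but care is needed because $H^1_0(\M)$ is merely a reflexive Banach space, not a Hilbert space, in the Finsler setting. Once this identification is made, Jensen's inequality is a black-box consequence of convexity and lower semicontinuity of $\E_\rho$; boundedness of $\rho$ together with $\int F^*(\dif f)^2 \diff\m = 2\E(f)<\infty$ for $f\in H^1_0(\M)$ ensures that all integrals are finite and that the use of Fubini is justified.
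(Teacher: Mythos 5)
Your proposal is correct and reaches the same conclusion, but the way you invoke Jensen differs from the paper's argument. The paper also treats $t\mapsto f_t$ as a Bochner integral in $H^1_0(\M)$ to get $\bar f_T\in H^1_0(\M)$, but then applies Jensen \emph{pointwise on each cotangent fibre}: using the linearity of $\dif$ to move the exterior derivative inside the time integral, one gets $\dif\bar f_T = \frac{1}{T}\int_0^T \dif f_t\,\diff t$ $\m$-a.e., and then $F^*(\cdot)^2$ is a convex function on the finite-dimensional space $T^*_x\M$, so the ordinary finite-dimensional Jensen inequality followed by Fubini gives the estimate in one display. You instead apply a Banach-space--valued Jensen inequality to the functional $\E_\rho$ directly on $H^1_0(\M)$, which requires you to establish (or cite) convexity and lower semicontinuity of $\E_\rho$ and then invoke a non-trivial theorem about Jensen for Bochner integrals in Banach spaces. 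Both routes are valid; the paper's is more elementary and self-contained, since it never needs the infinite-dimensional Jensen theorem and only uses that $\dif$ is linear and that $F^*(\cdot)^2$ is a fibrewise convex function. Your version packages the same content at a higher level of abstraction and relies on more functional-analytic machinery, but it does correctly isolate the essential algebraic point: $\E_\rho$ is convex because $F^*(\dif\,\emparg)$ is a fibrewise seminorm in the differential, and $\dif$ is linear.

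Two small points worth noting. First, regarding separability of $H^1_0(\M)$: your argument via $\sigma$-compactness and separability of $C^\infty_c(\M)$ is fine, though you could alternatively use that $H^1_0(\M)$ is by definition the $H^1$-closure of the separable space $C^\infty_c(\M)$. Second, when you invoke lower semicontinuity of $\E_\rho$ for the Banach-valued Jensen inequality, strong lower semicontinuity (which you prove via Fatou) together with convexity does imply weak lower semicontinuity, which is what some formulations of Banach-valued Jensen require; since you supply both convexity and strong lsc, this is adequate.
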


\begin{proof}
By hypothesis, $t \mapsto f_t \in H^1_0(\M)$ is Bochner integrable 
and we have $\bar{f}_T \in H^1_0(\M)$. 
Then, the claimed inequality is a consequence of the linearity of $\dif$ 
and Jensen's inequality for the convex function $(F^*)^2$: 
\begin{align*}
2\E_{\rho}(\bar{f}_T)
&= \int_\M F^* \biggl( \frac{1}{T} \int_0^T \dif f_t \diff t \biggr)^2 \rho \diff\m
 \le \frac{1}{T} \int_\M \int_0^T F^*(\dif f_t)^2 \rho \diff t \diff\m \\
&= \frac{2}{T} \int_0^T \E_{\rho}(f_t) \diff t \fstop
\end{align*}
\end{proof}

The next proposition is the goal of this subsection.

\begin{prop}\label{p:UBA}
For sufficiently small $T_0>0$, 
the families $\{\bar{\varphi}^\delta_t\chi_k\}_{0<t<T_0,\, 0<\delta<1}$ and $\{\bar{\Phi}^\delta_t\chi_k\}_{0<t<T_0,\, 0<\delta<1}$ are bounded in $H^1_0(\M)$ for every $k \in \N$.
\end{prop}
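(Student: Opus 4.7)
The plan is to estimate separately the $L^2$-norm and the two Dirichlet energies $\E(\pm\,\bar\varphi_t^\delta\chi_k)$ (with the analogous argument for $\bar\Phi_t^\delta\chi_k$), uniformly in $(t,\delta)\in(0,T_0)\times(0,1)$. The $L^2$-bound is immediate from $0\le\varphi_t^\delta,\Phi_t^\delta\le KL$ and $\supp\chi_k\subset B_{k+1}$. For the energy, the linearity and Leibniz rule of $\dif$ (which, unlike $\nabla$, satisfies them; see Remark~\ref{rm:nabla}) combined with the sub-additivity of $F^*$ yield
\[
F^*\bigl(\dif(\bar\varphi_t^\delta\chi_k)\bigr)^{2} \,\le\, 2\chi_k^2 F^*(\dif\bar\varphi_t^\delta)^{2} + 2(\bar\varphi_t^\delta)^{2} F^*(\dif\chi_k)^{2},
\]
and a companion bound for $F^*(-\dif(\cdot))^{2}$ via $F^*(-\alpha)\le\Lambda_F F^*(\alpha)$. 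The second summand on the right is uniformly bounded by $(KL)^2\Lambda_F^2\,\mssm(B_{k+1})$. For the first, Lemma~\ref{l:2.5} gives $\E_{\chi_k^2}(\bar\varphi_t^\delta)\le t^{-1}\int_0^t \E_{\chi_k^2}(\varphi_s^\delta)\,\diff s$, and then the chain rule for $\dif$ together with $\Lambda_F<\infty$ reduces the task to proving
\[
\int_0^T \E_{\varphi'(u_t^\delta)^2\chi_k^2}(-u_t^\delta)\,\diff t \,\le\, C_k\,T \qquad \text{for all } T\in(0,T_0],
\]
with $C_k$ independent of $\delta$.

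To obtain this linear-in-$T$ bound, I would apply Lemma~\ref{l:EQH} with $\rho=\chi_k^2$, multiply through by $t$, rearrange, and integrate from $0$ to $T$. An integration by parts on the $t\,\partial_t$-term (using $t(\chi_k^2,\Phi_t^\delta)_{L^2}\to 0$ as $t\downarrow 0$) converts it into $-T(\chi_k^2,\Phi_T^\delta)_{L^2}+\int_0^T(\chi_k^2,\Phi_t^\delta)_{L^2}\,\diff t$, which together with $\Psi\le\Phi\le\varphi\le KL$ contributes at most $2TKL\,\mssm(B_{k+1})$. The delicate term is the remaining $\tfrac12\int_0^T t\bigl(\dif(\varphi'^2\chi_k^2),\nabla(-u_t^\delta)\bigr)_{L^2}\diff t$. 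Expanding by the Leibniz rule,
\[
\dif(\varphi'^2\chi_k^2) \,=\, 2\varphi'\varphi''\chi_k^2\,\dif u_t^\delta + 2\varphi'^2\chi_k\,\dif\chi_k,
\]
and using the Finsler duality $(\dif u_t^\delta)(\nabla(-u_t^\delta))=-F(\nabla(-u_t^\delta))^{2}$ together with $-\varphi''\le(C/K)\varphi'$, the first piece is pointwise nonnegative and bounded above by $(4C/K)\,\E_{\varphi'^2\chi_k^2}(-u_t^\delta)$. For the second, $\dif\chi_k(\nabla(-u_t^\delta))\le F^*(\dif\chi_k)F(\nabla(-u_t^\delta))\le\Lambda_F F(\nabla(-u_t^\delta))$, and Young's inequality with parameter $\eta>0$ yields a bound $4\eta\,\E_{\varphi'^2\chi_k^2}(-u_t^\delta)+(\Lambda_F^2/\eta)\,\mssm(B_{k+1})$; the localisation $\supp(\dif\chi_k)\subset B_{k+1}$ is crucial in order to keep this ``lower-order'' constant finite (we never assume $\mssm(\M)<\infty$ here).

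Choosing now $T_0$ small, $K$ large, and $\eta$ small so that $T_0[(2C/K)+\eta]\le\tfrac12$, the energy contributions on the right can be absorbed into the left, while the leftover term, after the $\int_0^T t\,(\cdots)\,\diff t$ is evaluated, is $O(T^2)\,\mssm(B_{k+1})$, hence of order $T$ after dividing by $T$. This gives the desired linear-in-$T$ bound, and the same argument covers $\bar\Phi_t^\delta\chi_k$ once one uses $F^*(\dif\Phi_t^\delta)=\varphi'(u_t^\delta)^2 F^*(\dif u_t^\delta)\le F^*(\dif\varphi_t^\delta)$ (since $0\le\varphi'\le 1$). \textbf{Main obstacle.} The interaction of (i)~the non-symmetric Finsler pairing $(\dif u_t^\delta)(\nabla(-u_t^\delta))$, which in the reversible Dirichlet-form setting would reduce to a plain integration by parts, (ii)~the cut-off error from $\dif\chi_k$ (localised near $\partial B_{k+1}\setminus B_k$), and (iii)~the singular prefactor $1/t$ in Lemma~\ref{l:EQH} all conspire in this delicate piece, and a uniform-in-$\delta$ bound is obtained only by simultaneously tuning $T_0$, $K$, and $\eta$.
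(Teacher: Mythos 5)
Your proposal is essentially the paper's proof: both rest on Lemma~\ref{l:EQH} with $\rho=\chi_k^2$, the Leibniz rule for $\dif$, an integration by parts in $t$ of the $t\partial_t(\Phi_t^\delta,\chi_k^2)_{L^2}$ term, absorption of the $\varphi''$-piece using $-\varphi''\le(C/K)\varphi'$ after restricting to $t\le T_0\sim K/C$, and finally Lemma~\ref{l:2.5} to pass from the time integral to the time average. The only real difference is bookkeeping: you reduce $\E(\pm\bar\varphi_t^\delta\chi_k)$ to $\E_{\chi_k^2}(\bar\varphi_t^\delta)$ and absorb the cut-off cross term directly into $\E_{\varphi'^2\chi_k^2}(-u_t^\delta)$ via a free Young parameter $\eta$, whereas the paper works with $U_t^\delta=2\E(-\varphi_t^\delta\chi_k)$ throughout and uses the coupled inequality $2t\sqrt{aU_t^\delta}\le U_t^\delta/8+8at^2$; these are interchangeable. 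One small technical point to fix: Lemma~\ref{l:2.5} requires $f_t\in H^1_0(\M)$ with $\int_0^T\|f_t\|_{H^1}^2\diff t<\infty$, but $\varphi_s^\delta$ alone is only in $L^\infty\cap H^1_{\loc}$ and need not lie in $L^2(\M)$ when $\m(\M)=\infty$ (recall $\m(\M)<\infty$ is not assumed for this proposition). You should therefore apply the lemma to $\varphi_s^\delta\chi_{k+1}$ (so that the weight $\rho=\chi_k^2$ sees only the region where $\chi_{k+1}\equiv1$), or, as the paper does, to $-\varphi_s^\delta\chi_k$ directly, which sidesteps the issue and also makes the $L^2$-integrability hypothesis of Lemma~\ref{l:2.5} immediate from the bound on $\int_0^T\E(-\varphi_s^\delta\chi_k)\diff s$ just obtained.
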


\begin{proof}
Since $\varphi$ and $\Phi$ are bounded,
it is straightforward that both families are bounded in $L^2(\M)$. 
Thus, we discuss only the bound for the energy. 
Put
\[ U_t^\delta := 2\E(-\varphi_t^\delta \chi_k) \comma \qquad a := \Lambda_F^2 \cdot \m(B_{k+1}) \comma \] 
for $\Lambda_F$ in \eqref{eq:rev}. 
Then, by \eqref{e:estimates} and the choice of $\chi_k$ as in Section~\ref{sc:max}, we have 
\begin{align}
U_t^\delta
&\le \bigl\| F^*\bigl( -\varphi'(u_t^{\delta}) \chi_k \, \dif u_t^{\delta} \bigr) 
 +F^*(-\varphi_t^{\delta} \, \dif \chi_k) \bigr\|_{L^2}^2
 \label{eq:U<}\\
&= \bigl\| \varphi'(u_t^\delta) \chi_k F^* (-\dif u_t^\delta)
 +\varphi_t^\delta F^* (-\dif \chi_k) \bigr\|_{L^2}^2
 \nonumber\\
&\le 4\E_{\varphi'(u^{\delta}_t)^2\chi_k^2}(-u_t^\delta) +2(KL)^2 \m(B_{k+1}) \fstop
 \nonumber
\end{align}
Letting $\rho=\chi_k^2$ in Lemma~\ref{l:EQH}, we find
\begin{align*}%\label{e:UBA1}
V_t^\delta 
&:= 2\E_{\varphi'(u^{\delta}_t)^2\chi_k^2}(-u_t^\delta) \\
&= -2t\partial_t(\Phi_t^\delta, \chi_k^2)_{L^2}
 + 2(\Psi_t^\delta, \chi_k^2)_{L^2}
 + t\Bigl( \dif\bigl( \varphi'(u^\delta_t)^2\chi_k^2 \bigr), \nabla(-u_t^\delta) \Bigr)_{L^2} \fstop
\end{align*}
Note that, in the RHS, $(\Psi_t^\delta, \chi_k^2)_{L^2} \le KL\m(B_{k+1})$ 
by \eqref{e:estimates} and the choice of $\chi_k$. 
For the third term, we deduce from the Leibniz rule for $\dif$ and \eqref{e:estimates} that 
\begin{align*}
&\Bigl(\dif\bigl(\varphi'(u^\delta_t)^2\chi_k^2\bigr), \nabla(-u_t^\delta)\Bigr)_{L^2}
\\
&=2\bigl(\dif\chi_k, \nabla(-u_t^\delta) \bigr)_{L^2(\varphi'(u_t^\delta)^2 \chi_k)}
 +2\bigl( \varphi''(u^\delta_t) \,\dif u_t^\delta, \nabla(-u_t^\delta) \bigr)_{L^2(\varphi'(u^\delta_t)\chi_k^2)}
\\
&\le 2\Bigl( F^*(\dif\chi_k), F\bigl( \nabla(-u_t^\delta) \bigr) \Bigr)_{L^2(\varphi'(u_t^\delta)^2 \chi_k)}
 +\frac{4C}{K} \E_{\varphi'(u^\delta_t)^2 \chi_k^2}(-u_t^\delta) \fstop
\end{align*}
Since 
$\dif(\varphi_t^\delta \chi_k) = \varphi'(u_t^\delta) \chi_k \, \dif u_t^\delta + \varphi_t^\delta \, \dif \chi_k$ 
by the chain rule for $\dif$, we also have 
\[ \varphi'(u_t^\delta) \chi_k F\bigl( \nabla (-u_t^\delta) \bigr)
 \le F\bigl( \nabla(-\varphi_t^\delta \chi_k) \bigr) + \varphi_t^\delta F(\nabla \chi_k) \fstop \]
Combining these inequalities with $2\E(\chi_k) \le 2\Lambda_F^2 \E(-\chi_k) \le a$ yields that 
\begin{align*}
V_t^\delta
&\le -2t\partial_t(\Phi_t^\delta, \chi_k^2)_{L^2} +2KLa +\frac{2Ct}{K} V_t^\delta \\
&\quad +2t\Bigl( F^*(\dif\chi_k), F\bigl( \nabla(-\varphi_t^\delta \chi_k) \bigr) \Bigr)_{L^2(\varphi'(u_t^\delta))}
 +2t\bigl( F^*(\dif\chi_k), F(\nabla \chi_k) \bigr)_{L^2(\varphi_t^\delta \varphi'(u_t^\delta))}
\\
&\le -2t\partial_t(\Phi_t^\delta, \chi_k^2)_{L^2} +2KLa +\frac{2Ct}{K} V_t^\delta
 +4t\sqrt{\E(\chi_k) \E(-\varphi_t^\delta \chi_k)} +4KLt \E(\chi_k)
\\
&\le -2t\partial_t(\Phi_t^\delta, \chi_k^2)_{L^2} +2KLa +\frac{2Ct}{K} V_t^\delta
 +2t\sqrt{a U_t^\delta}+2KLa t \fstop
\end{align*}
Hence, we obtain 
\begin{align}\label{eq:V<}
\biggl(1-\frac{2Ct}{K}\biggr)V_t^\delta 
&\le -2t\partial_t(\Phi_t^\delta, \chi_k^2)_{L^2} +\frac{U_t^\delta}{8} +8a t^2 +2KLa(t+1) \fstop
\end{align}

Now, put $T_0:=K/(4C)$ and observe that $V_t^\delta/2 \le (1-(2Ct)/K)V_t^\delta$ for $t \in (0, T_0]$. 
Thus, for $t \in (0, T_0]$, we find from \eqref{eq:U<} and \eqref{eq:V<} that 
\begin{align}
U_t^\delta 
&\le 2V_t^\delta +2K^2 L^2 a \label{eq:UV}\\
&\le -8t\partial_t(\Phi_t^\delta, \chi_k^2)_{L^2} + \frac{U_t^\delta}{2}
 +32a T_0^2 +8KLa(T_0 +1) +2K^2 L^2 a \fstop \nonumber
\end{align}
This implies 
\[ U_t^\delta \le -16t \partial_t(\Phi_t^\delta, \chi_k^2)_{L^2} +c \comma \]
where $c$ is a constant independent of $t \in (0,T_0]$ and $\delta \in (0,1)$. 
Hence, for $0<\e<t \le T_0$, 
\begin{align*}
\int_\e^t 2\E(-\varphi_s^\delta \chi_k) \diff s 
&= \int_\e^t U_s^\delta \diff s
\le -16\int_\e^t s \partial_s(\Phi_s^\delta, \chi_k^2)_{L^2} \diff s + c(t-\e)
\\
&= -16 \Big[ s(\Phi_s^\delta, \chi_k^2)_{L^2} \Big]_\e^t
 + 16\int_\e^t (\Phi_s^\delta, \chi_k^2)_{L^2} \diff s + c(t-\e) \fstop
\end{align*}
Letting $\e \to 0$ yields, since $0 \le \Phi \le KL$, 
\[ \int_0^t 2\E(-\varphi_s^\delta \chi_k) \diff s 
 \le 16KLat +ct \fstop \]
Therefore, we conclude 
\begin{align*} %\label{in:AV}
2\E(-\bar\varphi_t^\delta \chi_k) 
 \le \frac{1}{t} \int_0^t 2\E(-\varphi_s^\delta \chi_k) \diff s
 \le 16KLa +c \comma
\end{align*}
where the first inequality follows from Lemma~\ref{l:2.5}. 
This completes the proof of the boundedness of $\{ \bar\varphi_t^\delta \chi_k \}_{0<t<T_0,\, 0<\delta<1}$ 
in $H^1_0(\M)$. 

By a similar calculation to \eqref{eq:U<}, we find 
\[
 2\E(-\Phi_t^\delta \chi_k)
 \le \bigl\| \varphi'(u_t^\delta)^2 \chi_k F^* (-\dif u_t^\delta)
 +\Phi_t^\delta F^* (-\dif \chi_k) \bigr\|_{L^2}^2
 \le 2V_t^\delta +2(KL)^2 a \fstop
\]
Thus, we deduce from \eqref{eq:UV} that, for $t \in (0, T_0]$, 
\[
2\E(-\Phi_t^\delta \chi_k)
 \le -8t\partial_t(\Phi_t^\delta, \chi_k^2)_{L^2} +\frac{U_t^\delta}{2} +\frac{c}{2}
 \le -16t\partial_t(\Phi_t^\delta, \chi_k^2)_{L^2} +c \comma 
\]
which yields 
\begin{equation}\label{ineq:ADB}
\int_0^t 2\E(-\Phi_s^\delta \chi_k) \diff s \le 16KLat +ct
\end{equation}
and the boundedness of $\{ \bar\Phi_t^\delta \chi_k \}_{0<t<T_0,\, 0<\delta<1}$ in $H^1_0(\M)$ 
by the same reasoning as above. 
\end{proof}

\subsection{Limit functions}\label{ssc:limit}%%%%%
%%%%%

Thanks to Proposition~\ref{p:UBA} and the reflexivity of $H^1_0(\M)=H^1(\M)$, 
up to extracting a (non-relabelled) subsequence, we have 
\[%\label{e:wcp}
\bar\varphi_t^\delta \chi_k \xrightarrow{\delta \to 0,\, t \to 0} \exists \bar\varphi_k \comma
 \quad\text{weakly in $H^1_0(\M)$} \fstop
\]
Since $\bar\varphi_k = \bar\varphi_l$ on $B_k$ for $k \le l$, 
there exists a bounded nonnegative function $\bar\varphi_0 \in H^1_{\loc}(\M)$ 
such that $\bar\varphi_0=\bar\varphi_k$ on $B_k$ for every $k \in \N$. 
By the boundedness of $\Phi$ and $\Psi$, 
we may take a further (non-relabelled) subsequence such that, 
by passing $\delta \to 0$ and then $t \to 0$, 
\[%\label{e:wcp1}
 \Phi_t^{\delta} \xrightarrow{\delta \to 0,\, t \to 0} \exists \Phi_0 \comma \qquad 
 \bar\Phi_t^{\delta} \xrightarrow{\delta \to 0,\, t \to 0} \exists \bar\Phi_0 \comma \qquad 
 \bar\Psi_t^{\delta} \xrightarrow{\delta \to 0,\, t \to 0} \exists \bar\Psi_0
\]
for some nonnegative functions $\Phi_0, \bar\Phi_0, \bar\Psi_0 \in L^{\infty}(\M)$, 
both in the weak $L^2(\nu)$ sense for any finite measure $\nu$ mutually absolutely continuous with $\m$ 
and in the weak-star $L^\infty(\m)$ sense. 

Then the goal of this subsection is to prove the next proposition. 

\begin{prop}\label{p:LPE}
We have
\[ \bar\Phi_0= \Phi\biggl(\frac{\bar\mssd_B^2}{2}\biggr) \quad \text{a.e.} \]
In particular, $\bar\Phi_0$ is uniquely determined and is the weak limit of $\bar\Phi_t$ as $t \to 0$.
\end{prop}

Recall Proposition~\ref{pr:dAB} for $\bar\d_B$.
To this end, we first observe the following.

\begin{lem}\label{lm:LPE1}
We have
\[ \bar\Phi_0 \le \bar\varphi_0 \le \frac{\bar\d_B^2}{2} \quad \text{a.e.} \]
\end{lem}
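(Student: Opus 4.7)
The first inequality $\bar\Phi_0 \le \bar\varphi_0$ is immediate from the pointwise bound $\Phi(s) \le \varphi(s)$ in \eqref{e:estimates} (since $0 < \varphi' \le 1$ forces $\varphi'^2 \le \varphi'$). This propagates to $\Phi_t^\delta \le \varphi_t^\delta$, then to the time averages $\bar\Phi_t^\delta \le \bar\varphi_t^\delta$, and finally to the weak $L^2(\nu)$-limits by testing against any nonneg $\psi \in L^2(\nu)$.

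For the second inequality, the plan is to invoke the maximality of $\bar\d_B$ in $\L_{B,R}$ from Proposition~\ref{pr:dAB}: showing $\sqrt{2\bar\varphi_0}\wedge R \in \L_{B,R}$ for every $R > 0$ yields $\sqrt{2\bar\varphi_0}\wedge R \le \bar\d_B \wedge R$ a.e., and letting $R \to \infty$ gives $\bar\varphi_0 \le \bar\d_B^2/2$ a.e. Three conditions must be verified: (a) the regularity $\sqrt{2\bar\varphi_0}\wedge R \in H^1_{\loc}(\M) \cap L^\infty(\M)$, which will follow from Proposition~\ref{p:UBA} (giving $\bar\varphi_0\chi_k \in H^1_0(\M)$), the uniform $L^\infty$-bound $\bar\varphi_t^\delta \le KL$, and Lipschitz composition with $s \mapsto \sqrt{2s}\wedge R$; (b) the vanishing $\sqrt{2\bar\varphi_0}\wedge R = 0$ on $B$ a.e., equivalent to $\bar\varphi_0 = 0$ on $B$ a.e., which I would obtain by testing the weak limit against $\1_B$ and using the pointwise bound $\varphi(u_s^\delta) \le u_s^\delta \le -s\log((1-\delta)\T_s\1_B + \delta)$ together with the $L^2$-continuity $\T_s\1_B \to \1_B$ on $B$ as $s\to 0$; and (c) the gradient bound $F^*(-\dif(\sqrt{2\bar\varphi_0}\wedge R)) \le 1$ a.e., which by the chain rule for $\dif$ reduces to $F^*(-\dif\bar\varphi_0)^2 \le 2\bar\varphi_0$ a.e.

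Step (c) is the main obstacle. The plan is to start from Lemma~\ref{l:EQH}, identify $\E_{\varphi'(u_t^\delta)^2\rho}(-u_t^\delta)$ with $\E_\rho(-\varphi_t^\delta)$ via the chain rule for $\dif$, and expand $\dif(\varphi'(u_t^\delta)^2 \rho)$ by the Leibniz rule. The Finsler-specific ingredient is the Legendre duality identity
\[ \dif u_t^\delta\bigl(\nabla(-u_t^\delta)\bigr) = -F^*(-\dif u_t^\delta)^2 \comma \]
which, together with $\varphi'' \le 0$ and $|\varphi''| \le (C/K)\varphi'$ from \eqref{e:estimates}, allows one to isolate and control the $\varphi''$-contribution with nonnegative sign. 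Multiplying the resulting identity by $t$, integrating over $(0, T)$ for $T \le T_0 = K/(4C)$ to absorb the $\varphi''$-term as in Proposition~\ref{p:UBA}, performing integration by parts on the $\partial_t$-term, and invoking Lemma~\ref{l:2.5}, I expect to reach an estimate of the form
\[ \E_\rho(-\bar\varphi_T^\delta) \le (\rho, \bar\Psi_T^\delta)_{L^2} + (\rho, \bar\Phi_T^\delta)_{L^2} - (\rho, \Phi_T^\delta)_{L^2} + \mathrm{drift}(T, \delta, \rho) \]
for nonneg test $\rho$, with the drift vanishing as $T, \delta \to 0$ by the localised energy bounds of Proposition~\ref{p:UBA}. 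Passing to the weak $L^2(\nu)$-limit, using the weak lower semicontinuity of $\E_\rho(-\cdot)$, the monotonicity $\bar\Psi_0 \le \bar\Phi_0$, and the identification $\bar\Phi_0 = \Phi_0$ (arising from continuity in $s$ of $\Phi(u_s^\delta)$ at $s = 0^+$ for fixed $\delta$, together with the order of limits), I expect to conclude $\E_\rho(-\bar\varphi_0) \le (\rho, \bar\varphi_0)_{L^2}$, equivalent to the pointwise bound $F^*(-\dif\bar\varphi_0)^2 \le 2\bar\varphi_0$ a.e.

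The hardest part is tracking the constants to land exactly on $2$: the Finsler non-linearity of $\nabla$ and the absence of the symmetry $\int u_1 \Delta u_2 \diff \m = \int u_2 \Delta u_1 \diff \m$ prevent the symmetric calculations of \cite{HinRam03, AriHin05}, and the duality identity above is what compensates. The precise cancellation between the $\bar\Phi_T^\delta$ and $\Phi_T^\delta$ terms produced by the integration by parts appears essential to deliver the sharp constant.
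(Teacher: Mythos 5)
Your treatment of the first inequality matches the paper. For the second inequality your outline follows the paper's mechanics closely (use Lemma~\ref{l:EQH} with a test $\rho$ supported in $B_k$, expand $\dif(\varphi'(u_t^\delta)^2\rho)$ by Leibniz, absorb the $\varphi''$-term via $-\varphi'' \le (C/K)\varphi'$ and the Legendre duality $\dif u_t^\delta(\nabla(-u_t^\delta)) = -F^*(-\dif u_t^\delta)^2$, time-average, integrate by parts in $t$, pass to the weak limit by lower semicontinuity of $\E_\rho$, and finally invoke the maximality in Proposition~\ref{pr:dAB}). However, there is one step where your proposal diverges from the paper and, as written, has a genuine gap.

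The gap is the claimed identification $\bar\Phi_0 = \Phi_0$. Both are weak $L^2(\nu)$-limits extracted along a common subsequence $t_n \to 0$: $\Phi_0$ from the pointwise-in-time family $\Phi_t$, and $\bar\Phi_0$ from the Ces\`aro averages $\frac{1}{t}\int_0^t \Phi_s \diff s$. The latter averages over the full range $s \in (0,t)$, not just the extracted subsequence, so convergence of $(\Phi_{t_n},\psi)$ to $(\Phi_0,\psi)$ says nothing about the Ces\`aro mean without an additional regularity or Tauberian hypothesis. Indeed, the entire Subsection~\ref{ssc:Tauber} is devoted precisely to converting the averaged statement into the pointwise-in-time one, and the identity you are assuming in the middle of Lemma~\ref{lm:LPE1} is roughly what that Tauberian argument ultimately produces -- so assuming it here would be circular. ``Continuity in $s$ of $\Phi(u_s^\delta)$ at $s=0^+$ for fixed $\delta$'' does not help either, since for fixed $\delta>0$ one actually has $u_s^\delta \to 0$ as $s\to 0$, and the relevant $\delta\to 0$ then $t\to 0$ double limit is taken weakly along a subsequence.

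The paper sidesteps this by not attempting the sharp constant at this stage. Discarding only $\Phi_0 \ge 0$ and using $0\le\Psi\le\Phi\le\varphi$, it obtains $2\E_\rho(-\bar\varphi_0) \le (4\bar\varphi_0,\rho)_{L^2}$, hence $F^*(-\dif\sqrt{\bar\varphi_0}) \le 1$ a.e.\ and, via Proposition~\ref{pr:dAB}, only the weaker $\bar\varphi_0 \le \bar\d_B^2$. The factor-of-two improvement to $\bar\varphi_0 \le \bar\d_B^2/2$ is then a separate self-improvement/bootstrap argument, citing \cite[Lemma~4.5]{AriHin05} (see also \cite[Lemma 2.12]{HinRam03}) and crucially relying on the upper estimate in Proposition~\ref{pr:U}. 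You should replace the $\bar\Phi_0 = \Phi_0$ step with this two-stage route: prove the crude bound $\bar\varphi_0 \le \bar\d_B^2$, then bootstrap using Proposition~\ref{pr:U}.

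A minor remark: your reduction of the gradient bound to $F^*(-\dif\bar\varphi_0)^2 \le 2\bar\varphi_0$ is fine, and your observation that the chain rule for $\nabla$ must be applied to $-u_t^\delta$ (not $u_t^\delta$) because of irreversibility is exactly the care the paper takes; likewise the Legendre-duality identity you invoke is the one implicitly used in the paper's bound for the $\varphi''$-term.
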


\begin{proof} 
We will use the same symbols as in Proposition~\ref{p:UBA}. 
The former inequality~$\bar\Phi_0 \le \bar\varphi_0$ follows from $\Phi \le \varphi$. 
To show the latter inequality, take a bounded nonnegative function $\rho \in H^1_0(\M)$ 
with $\supp\, \rho \subset B_k$. 
Then, it follows from $\chi_k \equiv 1$ on $B_k$ and Lemma~\ref{l:EQH} that 
\begin{align*}
2\E_\rho (-\varphi_t^\delta\chi_k)
&= 2\E_{\varphi'(u^{\delta}_t)^2 \rho}(-u_t^\delta)
\\
&=-2t\partial_t(\Phi_t^\delta, \rho)_{L^2}
 +t\Bigl(\dif \bigl(\varphi'(u^{\delta}_t)^2 \rho \bigr), \nabla (-u_t^\delta)\Bigr)_{L^2} 
 +2(\Psi_t^\delta, \rho)_{L^2} \fstop
\end{align*}
By the Leibniz rule for $\dif$ and \eqref{e:estimates}, the second term can be bounded as 
\begin{align*}
&\Bigl(\dif \bigl(\varphi'(u^{\delta}_t)^2 \rho \bigr), \nabla (-u_t^\delta) \Bigr)_{L^2} 
\\
&= \bigl( \varphi'(u_t^\delta)^2 \, \dif \rho, \nabla (-u_t^\delta) \bigr)_{L^2} 
 +2\bigl( \varphi''(u_t^\delta) \, \dif u_t^\delta, \nabla (-u_t^\delta) \bigr)_{L^2(\varphi'(u^\delta_t)\rho)} 
\\
&\le \bigl( \dif \rho, \nabla (-\Phi_t^\delta) \bigr)_{L^2}
 + \frac{4C}{K} \E_{\varphi'(u_t^\delta)^2 \rho}(-u_t^\delta)
\\
&= \bigl( \dif \rho, \nabla(-\Phi_t^\delta\chi_k) \bigr)_{L^2} 
 + \frac{4C}{K} \E_\rho (-\varphi_t^\delta)
 \fstop
\end{align*}
Combining this with the above calculation and observing 
$\E_\rho (-\varphi_t^\delta) =\E_\rho (-\varphi_t^\delta \chi_k)$, 
we find 
\begin{align*} %\label{e:LPE1}
2\biggl( 1-\frac{2Ct}{K} \biggr) \E_\rho (-\varphi_t^\delta \chi_k)
 \le -2t\partial_t(\Phi_t^\delta, \rho)_{L^2} +t\bigl( \dif \rho, \nabla(-\Phi_t^\delta\chi_k) \bigr)_{L^2} 
 +2(\Psi_t^\delta, \rho)_{L^2} \fstop
\end{align*}
This implies, with the help of Fubini's theorem, 
\begin{align}\label{e:LPE2}
&2\biggl(1-\frac{2CT}{K}\biggr) \frac{1}{T} \int_0^T \E_\rho (-\varphi_t^\delta \chi_k) \diff t
\\
&\le -\frac{2}{T} \Big[ t(\Phi_t^\delta, \rho)_{L^2} \Big]_0^T +\frac{2}{T} \int_0^T (\Phi_t^\delta, \rho)_{L^2} \diff t
 +\frac{1}{T}\int_0^T t\bigl( \dif \rho, \nabla(-\Phi_t^\delta\chi_k) \bigr)_{L^2} \diff t 
 + 2(\bar\Psi_T^\delta, \rho)_{L^2} \notag
\\
&= -2(\Phi_T^\delta, \rho)_{L^2} + 2(\bar\Phi_T^\delta, \rho)_{L^2} 
 +\frac{1}{T}\int_0^T t \bigl( \dif \rho, \nabla(-\Phi_t^\delta\chi_k) \bigr)_{L^2} \diff t 
 + 2(\bar\Psi_T^\delta, \rho)_{L^2} \fstop\notag
\end{align}
Using the integration by parts, we can estimate the third term of the RHS as 
\begin{align*}
&\frac{1}{T}\int_0^T t \bigl( \dif \rho , \nabla(-\Phi_t^\delta\chi_k)\bigr)_{L^2} \diff t 
\\
&= \frac{1}{T} \Biggl[ t \int_0^t \bigl( \dif \rho, \nabla(-\Phi_s^\delta\chi_k)\bigr)_{L^2} \diff s\Biggr]_{t=0}^{t=T} 
 - \frac{1}{T}\int_0^T\int_0^t\bigl( \dif \rho, \nabla(-\Phi_s^\delta\chi_k)\bigr)_{L^2}\diff s \diff t 
 \\
& \le \int_0^T 2\sqrt{\E(\rho) \E(-\Phi_s^\delta\chi_k)} \diff s
 + \frac{1}{T} \int_0^T\int_0^t 2\sqrt{\E(-\rho) \E(-\Phi_s^\delta \chi_k)} \diff s \diff t 
\\
&\xrightarrow{\delta \to 0,\, T \to 0} 0 \comma
\end{align*}
where the convergence in the last line follows from the boundedness \eqref{ineq:ADB}. 

By the lower semi-continuity of $\E_\rho$, 
taking the limit of \eqref{e:LPE2} as $\delta \to 0,\, T \to 0$ along the subsequence 
taken in the beginning of Subsection~\ref{ssc:limit} yields 
\begin{align*}%\label{e:LPE3}
2\E_\rho \bigl( -\bar\varphi_0 \bigr)
\le \bigl( -2\Phi_0 + 2\bar\Phi_0 + 2\bar\Psi_0, \rho \bigr)_{L^2} 
\le \bigl( 4\bar\varphi_0, \rho \bigr)_{L^2} \comma
\end{align*}
where we used $\rho \ge 0$ 
and $0 \le \Psi \le \Phi \le \varphi$ in \eqref{e:estimates} in the latter inequality. 
This implies that, for any $\e>0$, 
\[
2\E_\rho \Bigl( -\sqrt{\smash[t]{\bar\varphi_0+\e} \rule{0pt}{1.8ex}} \Bigr) 
 = \frac{1}{2} \E_{\rho/(\bar\varphi_0+\e)} \bigl( -\bar\varphi_0 \bigr)
 \le \biggl(\bar\varphi_0, \frac{\rho}{\bar\varphi_0+\e} \biggr)_{L^2}
 \le \|\rho\|_{L^1} \fstop
\]
Letting $\e \to 0$ and noting that $\rho$ was arbitrary, 
we find that $f_0 :=\sqrt{\bar\varphi_0} \in H^1_{\loc}(\M)$ 
with $F^*(-\dif f_0) \le 1$ a.e. 
Moreover, a similar proof to \cite[Lemma~4.4]{AriHin05} shows that $\bar\varphi_0=0$ a.e.\ on $B$. 
Thus, we have $f_0 \in \L_{B,\sqrt{KL}}$, and then it follows from Proposition~\ref{pr:dAB} that 
\begin{align*} %\label{in:FD}
\bar\varphi_0 \le \bar\d_B^2 \quad \text{a.e.}
\end{align*}
Finally, one can improve $\bar\varphi_0 \le \bar\mssd_B^2$ to $\bar\varphi_0 \le \bar\mssd_B^2/2$ 
by the same argument as in \cite[Lemma~4.5]{AriHin05} (see also \cite[Lemma 2.12]{HinRam03})
thanks to Proposition~\ref{pr:U}. 
\end{proof}

We can also show a partial converse inequality as follows. 

\begin{lem}\label{l:IID}
We have 
\[ \bar\Phi_0 \ge \Phi \biggl( \frac{\bar\mssd_B^2}{2} \biggr) \quad \text{a.e.} \]
\end{lem}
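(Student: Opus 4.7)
The strategy is to propagate the weak-limit lower bound from Corollary \ref{c:1} (valid for the non-averaged family $\Phi_t$) to the time-averaged, $\delta$-regularized limit $\bar\Phi_0$ by using the \emph{iterated}-limit convention ``first $\delta \to 0$, then $t \to 0$'' recorded in Subsection \ref{ssc:limit}. This convention allows us to dispose of the $\delta$-regularization in a preliminary step: since $(1-\delta)\T_s\1_B + \delta \ge \T_s\1_B$ and $\Phi$ is non-decreasing, $\Phi^\delta_s \uparrow \Phi_s$ monotonically as $\delta \downarrow 0$. Combined with the boundedness of $\Phi$ and dominated convergence, this yields $\bar\Phi^\delta_T \to \bar\Phi_T$ strongly in $L^2(\nu)$ for each fixed $T > 0$, where $\bar\Phi_T := T^{-1}\int_0^T \Phi_s \diff s$. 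Hence $\bar\Phi_0$ coincides with the weak $L^2(\nu)$-limit (and weak-star $L^\infty(\m)$-limit) of the $\delta$-free family $\bar\Phi_{T_n}$ along the chosen subsequence $T_n \to 0$.

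Next, I would test against an arbitrary nonnegative $\rho \in L^1(\m)$ of compact support. By weak-star $L^\infty$ convergence and Fubini, $\int_\M \bar\Phi_0 \rho \diff\m = \lim_n T_n^{-1}\int_0^{T_n} g(s)\diff s$ with $g(s) := \int_\M \Phi_s \rho \diff\m$, a bounded function. The elementary estimate $T^{-1}\int_0^T g \ge \inf_{s \in (0,T]} g(s)$ gives $\lim_n T_n^{-1}\int_0^{T_n} g \ge \liminf_{s \to 0} g(s)$. To bound the liminf from below, pick $s_k \to 0$ realizing it and extract a sub-subsequence along which $\Phi_{s_k} \rightharpoonup \Phi_*$ weakly in $L^2(\nu)$ (using boundedness in $L^2(\nu)$ for any finite measure $\nu \sim \m$ and reflexivity). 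Corollary \ref{c:1} then gives $\Phi_* \ge \Phi(\bar\d_B^2/2)$ a.e., so $\liminf_s g(s) = \int_\M \Phi_* \rho \diff\m \ge \int_\M \Phi(\bar\d_B^2/2)\rho \diff\m$. Chaining the inequalities produces $\int_\M \bar\Phi_0 \rho \diff\m \ge \int_\M \Phi(\bar\d_B^2/2) \rho \diff\m$ for every admissible $\rho \ge 0$, and the asserted a.e.\ pointwise inequality follows by arbitrariness of $\rho$.

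The main obstacle is the identification of $\bar\Phi_0$ with a weak limit of the $\delta$-free sequence $\bar\Phi_{T_n}$ in Step 1. This relies essentially on the iterated-limit convention of Subsection \ref{ssc:limit}; a naive joint diagonal limit in $(\delta, t)$ would force a delicate rate comparison between $\delta_n$ and $T_n$, since the convergence $\Phi^\delta_s \to \Phi_s$ is not uniform in $s$ as $s \downarrow 0$ (the pointwise error is controlled by $\delta/\T_s\1_B$, and $\T_s\1_B$ collapses to zero off $B$). With the iterated order in place, this obstruction is neutralized and the remaining estimates reduce to the pointwise-in-time Varadhan-type bound already available in Corollary \ref{c:1}.
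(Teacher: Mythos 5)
Your argument is correct and follows essentially the same route as the paper: the paper simply defers to~\cite[Lemma~2.13]{HinRam03} applied with Corollary~\ref{c:1}, and your write-up unwinds exactly that reference --- strip the $\delta$-regularization by dominated convergence, test $\bar\Phi_0 = \mathrm{w\text{-}}\lim_n \bar\Phi_{T_n}$ against nonnegative $\rho$, use $T^{-1}\int_0^T g \ge \inf_{(0,T]}g$, and close via weak compactness and Corollary~\ref{c:1}. The one technical point you implicitly lean on, that $s\mapsto \int_\M\Phi_s\rho\,\diff\m$ is continuous so $\inf$ and $\essinf$ coincide, does hold by the $L^2$-strong continuity of $s\mapsto \T_s\1_B$ together with boundedness of $\Phi$, so the step is sound.
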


\begin{proof}
We can follow the same lines as in \cite[Lemma~2.13]{HinRam03} by using Corollary~\ref{c:1}. 
\end{proof}

Now, we are ready to prove Proposition~\ref{p:LPE}. 

\begin{proof}[Proof of Proposition~\textnormal{\ref{p:LPE}}]
Thanks to Lemmas~\ref{lm:LPE1}, \ref{l:IID}, 
we can apply the argument in \cite[Lemma~2.14]{HinRam03} 
by replacing $\m$ with an equivalent finite measure $\nu$. 
\end{proof}

\subsection{Tauberian argument}\label{ssc:Tauber}%%%%%
%%%%%

By a Tauberian argument, we shall get rid of the time average from Proposition~\ref{p:LPE}. 
Here we need additional assumptions to make use of linearised heat semigroups. 
Note that, for the linearised gradient operator as in \eqref{eq:lin-gra}, we have 
\begin{equation}\label{eq:fh}
\dif h (\nabla f)
 = \sum_{i,j=1}^n g^*_{ij}(\dif f) \frac{\partial f}{\partial x^j} \frac{\partial h}{\partial x^i}
 = \dif f (\nabla^{\nabla f} h) \fstop
\end{equation}
Recall also that we set $u_t =\T_t \1_B$ and $\Phi_t =\Phi(-t \log \T_t \1_B)$ in Section \ref{sec:U}. 

\begin{lem}\label{l:215}
Assume $\mathsf{C}_F, \mathsf{S}_F<\infty$ and $\m(\M)<\infty$. 
Then, for every $\tau>0$ and measurable set $D \subset \M$ with $\m(D)>0$, we have 
\[ \lim_{t \downarrow 0} (\Phi_t, h^{\tau,\sigma}_{\tau-t})_{L^2}
 =(\bar\Phi_0, h^{\tau,\sigma}_{\tau})_{L^2} \comma \]
where $(h^{\tau,\sigma}_t)_{t \in [0,\tau]}$ is the solution to the linearised heat equation 
\begin{equation}\label{eq:linear}
\partial_t h^{\tau,\sigma}_t =\frac{1}{2} \Delta\!^{\nabla u_{\tau -t}} h^{\tau,\sigma}_t \comma
 \qquad h^{\tau,\sigma}_0 =\T_{\sigma} \1_D \fstop
\end{equation}
\end{lem}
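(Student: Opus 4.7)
The plan is to reduce to a Cesàro--Tauberian comparison for the function $G(s) := (\Phi_s, h^{\tau,\sigma}_{\tau-s})_{L^2}$ on $(0,\tau]$, using the linearised heat semigroup to bypass the asymmetry \eqref{eq:asym}. I will work in parallel with the $\delta$-regularised version $G^\delta(s) := (\Phi_s^\delta, h^{\tau,\sigma}_{\tau-s})_{L^2}$, exploiting that $\Phi_s^\delta \to \Phi_s$ pointwise and boundedly as $\delta \downarrow 0$, so that $G^\delta \to G$ pointwise on $(0,\tau]$.

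\emph{Cesàro side.} Since $t \mapsto h^{\tau,\sigma}_t$ is strongly $L^2$-continuous on $[0,\tau]$ by the standard theory for the linear non-autonomous Cauchy problem \eqref{eq:linear} (with coercivity from $\mathsf{C}_F, \mathsf{S}_F < \infty$), and using the weak $L^2(\m)$-convergence $\bar\Phi_t \to \bar\Phi_0$ (available since $\m(\M) < \infty$, after sending $\delta \downarrow 0$ in the joint convergences of Subsection~\ref{ssc:limit}), one splits
\[
\frac{1}{t}\int_0^t G(s)\,\diff s = (\bar\Phi_t, h^{\tau,\sigma}_\tau)_{L^2} + \frac{1}{t}\int_0^t \bigl(\Phi_s,\, h^{\tau,\sigma}_{\tau-s} - h^{\tau,\sigma}_\tau\bigr)_{L^2}\,\diff s,
\]
and the two terms on the right converge to $(\bar\Phi_0, h^{\tau,\sigma}_\tau)_{L^2}$ and $0$ respectively as $t \downarrow 0$.

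\emph{Tauberian side.} It remains to establish continuity of $G$ at $s = 0$. Differentiating $G^\delta$ in $s$ by applying Lemma~\ref{l:EQH} with the time-dependent test function $\rho_s = h^{\tau,\sigma}_{\tau-s}$ for the first slot, together with the evolution equation \eqref{eq:linear} for the second, yields
\[
(G^\delta)'(s) = \tfrac{1}{s}(\rho_s, \Psi^\delta_s)_{L^2} + \tfrac{1}{2}\bigl(\dif(\varphi'(u^\delta_s)^2 \rho_s),\, \nabla(-u^\delta_s)\bigr)_{L^2} - \tfrac{1}{s}\E_{\varphi'(u^\delta_s)^2 \rho_s}(-u^\delta_s) - \tfrac{1}{2}\bigl(\Phi^\delta_s,\, \Delta^{\nabla u_s} \rho_s\bigr)_{L^2}.
\]
Expanding the second term via the Leibniz rule for $\dif$ and the chain-rule relations between $u^\delta_s$ and $u_s$, and integrating the fourth term by parts, one invokes the symmetrisation identity $\dif h(\nabla u) = \dif u(\nabla^{\nabla u} h)$ from \eqref{eq:fh} to match the two $\tfrac{1}{2}$-terms, so that the singular $\tfrac{1}{s}$ energy contribution combines with $\tfrac{1}{s}(\rho_s, \Psi^\delta_s)_{L^2}$ into an $s$-integrable residue (the non-linear analogue of the mechanism in the linear self-adjoint case of \cite{HinRam03,AriHin05}). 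Together with \eqref{e:estimates}, the $L^\infty$ and $H^1$ control on $h^{\tau,\sigma}_t$ coming from the linear parabolic theory and $\m(\M) < \infty$, and the compatibility $\mathsf{C}_F, \mathsf{S}_F < \infty$ between $F^*$ and the linearised bilinear form, this produces a bound on $(G^\delta)'(s)$ integrable near $s = 0$ and uniform in $\delta$. Equicontinuity of $\{G^\delta\}_\delta$ on $[0,\tau]$ follows, and passing $\delta \downarrow 0$ gives continuity of $G$ at $0$, matching the Cesàro limit from the previous step.

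The main obstacle is the cancellation in the Tauberian step: the contribution of $\partial_s \Phi_s^\delta$ to $(G^\delta)'(s)$ carries a non-linear $\Delta u_s$ which, by \eqref{eq:asym}, cannot be transferred onto $h^{\tau,\sigma}_{\tau-s}$ as in the linear self-adjoint case; the identity \eqref{eq:fh} supplies exactly the missing symmetry between $\dif h(\nabla u)$ and $\dif u(\nabla^{\nabla u} h)$ needed to neutralise the singular $\tfrac{1}{s}$-contributions. This is the raison d'être of the linearised heat semigroup at this step, and also the reason why the argument does not transfer to the nonsmooth framework of Theorem~\ref{t:m2}, as already noted at the end of Subsection~\ref{subsec:MM}.
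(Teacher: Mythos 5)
Your overall architecture matches the paper's: reduce to a Cesàro-limit statement plus a regularity statement for $H(t) := (\Phi_t, h^{\tau,\sigma}_{\tau-t})_{L^2}$ near $t=0$, using the identity \eqref{eq:fh} as the key device to transfer the non-linear Laplacian onto the linearised evolution and neutralise the singular $1/s$-energy. The Cesàro side of your proposal is essentially the paper's condition (a), and is fine.

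The gap is in the Tauberian side. You claim that after the cancellation via \eqref{eq:fh} one obtains a bound on $(G^\delta)'(s)$ that is ``integrable near $s=0$ and uniform in $\delta$,'' from which you infer equicontinuity of $\{G^\delta\}_\delta$ on $[0,\tau]$ and hence continuity of $G$ at $0$. But the cancellation mechanism is intrinsically \emph{one-sided}: writing, as in the paper,
$H(t)-H(s) = I_1 + I_2 - I_3 - I_4$ (with $I_1$ carrying $\tfrac1r(h_{\tau-r},\Psi^\delta_r)_{L^2}$ and $I_3$ carrying $\tfrac1r\E_{\varphi'(u^\delta_r)^2 h_{\tau-r}}(-u^\delta_r)$), the Leibniz rule and \eqref{eq:fh} only show $I_2 \le I_3 + I_4$, hence $H(t)-H(s) \le I_1 \le KL\,\m(D)\,(t-s)/s$. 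This is an upper bound only; there is no control on the negative part of the derivative, because $I_3$ (and a priori $I_4$) may be large and is simply discarded. Equicontinuity of $\{G^\delta\}_\delta$ would require a two-sided bound, which is not available. The paper closes this by invoking a genuine Tauberian theorem, namely \cite[Lemma~3.11]{Ram01}, which states precisely that Cesàro convergence $T^{-1}\int_0^T H\,\diff t \to \ell$ together with the one-sided estimate $H(t)-H(s) \le M(t-s)/s$ for $0 < s < t \le t_0$ imply $\lim_{t\downarrow 0}H(t)=\ell$. Without this lemma (or something equivalent in one-sided form), your argument does not reach the conclusion.

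A secondary imprecision: you phrase the combination ``the singular $\tfrac1s$ energy contribution combines with $\tfrac1s(\rho_s,\Psi^\delta_s)_{L^2}$ into an $s$-integrable residue.'' There is no such cancellation between $I_1$ and $I_3$; both integrands are nonnegative, and what gets cancelled against $I_3$ is part of $I_2$ (together with $I_4$, via \eqref{eq:fh}). The term $I_1$ is estimated separately by $\Psi^\delta_s \le KL$ and $\int_\M h_{\tau-s}\,\diff\m = \m(D)$, giving the one-sided Lipschitz constant. Keeping these roles straight is what makes it transparent that only an upper bound emerges.
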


\begin{rem}\label{rm:linear}
In \eqref{eq:linear}, to be precise, 
we choose a measurable one-parameter family $(V_t)_{t \ge 0}$ of nowhere vanishing vector fields 
with $V_t(x)=\nabla u_t(x)$ when $\dif u_t(x) \neq 0$, 
and replace $\nabla u_{\tau -t}$ with $V_{\tau -t}$. 
The unique existence of a solution $(h^{\tau,\sigma}_t)_{t \in [0,\tau]}$ is guaranteed 
in the same manner as that for \eqref{eq:lin-heat} (see \cite[Proposition 13.20]{Oht21}) 
by virtue of the hypothesis $\mathsf{C}_F, \mathsf{S}_F<\infty$. 
We remark that $t \mapsto h^{\tau,\sigma}_t$ is $L^2$-continuous on $[0,\tau]$, 
and $\int_\M h^{\tau,\sigma}_t \diff \m =\m(D)$ holds for all $t$ 
(since constant functions belong to $L^2(\M)$). 
\end{rem}

We also remark that choosing $\T_{\sigma} \1_D$ as the initial point is unessential; 
we may take any nonnegative function in $H^1_0(\M)$ converging to $\1_D$ 
(see the very last step in Subsection~\ref{ssc:last}). 

\begin{proof}
We follow the argument in \cite[Lemma 2.15]{HinRam03} (see also \cite[Lemma~4.6]{AriHin05}), 
however, a modification is needed because of the asymmetry \eqref{eq:asym}. 

Since $\tau,\sigma>0$ are fixed, we will denote $h^{\tau,\sigma}_t$ by $h_t$ for brevity. 
For $t \in (0,\tau)$, put $H(t) :=(\Phi_t, h_{\tau -t})_{L^2}$. 
For applying the Tauberian-type theorem in \cite[Lemma~3.11]{Ram01} 
which implies the claim $\lim_{t \downarrow 0} H(t) = (\bar\Phi_0, h_{\tau})_{L^2}$, 
it suffices to see the following: 
\begin{enumerate}[(a)]
\item\label{it:a} $T^{-1} \int_0^T H(t) \diff t \to (\bar\Phi_0, h_{\tau})_{L^2}$ as $T \to 0$;
\item\label{it:b} There exist $M, t_0>0$ such that $H(t)-H(s) \le M(t-s)/s$ for all $0<s<t \le t_0$.  
\end{enumerate}

The condition \ref{it:a} follows from
\begin{align*}
\biggl| \frac{1}{T}\int_0^T H(t) \diff t -(h_\tau, \bar\Phi_0)_{L^2} \biggr| 
&\le \frac{1}{T}\int_0^T \bigl| H(t) - (h_\tau, \Phi_t)_{L^2} \bigr| \diff t 
 + \bigl| (h_\tau, \bar\Phi_T-\bar\Phi_0)_{L^2} \bigr| 
\\
&\le \frac{KL}{T}\int_0^T \| h_{\tau-t} -h_\tau \|_{L^1} \diff t 
 + \bigl|( h_\tau, \bar\Phi_T-\bar\Phi_0)_{L^2} \bigr| 
\\
&\xrightarrow{T \to 0} 0 \fstop
\end{align*}

To see \ref{it:b}, we observe from Lemma~\ref{l:EQH} and \eqref{eq:linear} that
\begin{align*}
\Bigl[ (h_{\tau-r}, \Phi_r^\delta)_{L^2} \Bigr]_{r=s}^{r=t} 
&= \int_s^t( h_{\tau-r}, \partial_r\Phi_r^\delta)_{L^2}\diff r
 + \int_s^t (\partial_r h_{\tau-r}, \Phi_r^\delta)_{L^2}\diff r
\\
&= \int_s^t\frac{1}{r} ( h_{\tau-r}, \Psi_r^\delta)_{L^2}\diff r 
 + \int_s^t \frac{1}{2} \Bigl(\dif \bigl( \varphi'(u_r^\delta)^2 h_{\tau-r} \bigr), \nabla(-u_r^\delta) \Bigr)_{L^2}\diff r
\\
&\quad -\int_s^t \frac{1}{r} \E_{\varphi'(u^{\delta}_r)^2 h_{\tau-r}} (-u_r^\delta) \diff r 
 - \int_s^t \frac{1}{2} (\Delta\!^{\nabla u_r} h_{\tau -r}, \Phi_r^\delta)_{L^2} \diff r 
\\
&=:I_1 +I_2 -I_3 -I_4 \fstop
\end{align*}
Note first that $I_1$ can be estimated as 
\[
I_1 \le \int_s^t \frac{1}{r} KL\, \m(D) \diff r \le KL\, \m(D) \frac{t-s}{s} \fstop
\]
Next, since 
\[ \dif \Phi^{\delta}_r =\varphi'(u^\delta_r)^2 \,\dif u^\delta_r
 = -\varphi'(u^\delta_r)^2 \frac{t(1-\delta)}{(1-\delta) u_r +\delta} \cdot \dif u_r \]
implies $g^*_{ij}(\dif u_r) =g^*_{ij}(-\dif \Phi^\delta_r)$, we deduce from \eqref{eq:fh} that 
\begin{align*}
2I_4 &= \int_s^t \bigl( \dif (-\Phi_r^\delta), \nabla^{\nabla u_r} h_{\tau -r} \bigr)_{L^2} \diff r
 = \int_s^t \bigl( \dif h_{\tau -r}, \nabla (-\Phi^\delta_r) \bigr)_{L^2} \diff r 
\\
&= \int_s^t \bigl( \dif h_{\tau -r}, \nabla (-u^\delta_r) \bigr)_{L^2(\varphi'(u^\delta_r)^2)} \diff r 
\end{align*}
(we remark that the integrands in the above calculation vanish on the set $\{\dif u_r =0\}$). 
Then, using the Leibniz rule for $\dif$ and \eqref{e:estimates}, we find 
\begin{align*}
I_2 
&= \int_s^t \frac{1}{2}
 \bigl(\dif h_{\tau-r}, \nabla(-u_r^\delta) \bigr)_{L^2(\varphi'(u_r^\delta)^2)} \diff r
 +\int_s^t \bigl( \varphi''(u_r^\delta) \,\dif u^\delta_r, \nabla(-u^\delta_r) \bigr)_{L^2(\varphi'(u^{\delta}_r) h_{\tau-r})} \diff r 
\\
& \le I_4 +\frac{2C}{K}\int_s^t  \E_{\varphi'(u^{\delta}_r)^2 h_{\tau-r}} (-u_r^\delta) \diff r 
 \fstop
\end{align*}
Setting $t \le t_0 := K/(2C)$, we have $I_2 \le I_4 +I_3$. 
Therefore, we obtain
\[ \Bigl[ (h_{\tau-r}, \Phi_r^\delta)_{L^2} \Bigr]_{r=s}^{r=t}
 \le I_1  \le KL\,\m(D) \frac{t-s}{s} \quad\, \text{for all}\ 0<s<t \le t_0 \fstop \]
Letting $\delta \to 0$ completes the proof. 
\end{proof}

\subsection{Proof of \eqref{eq:G}}\label{ssc:last}%%%%%
%%%%%

We are now in a position to prove \eqref{eq:G}. 
We shall in fact show that 
\begin{equation}\label{eq:HR2.22}
\lim_{t \downarrow 0} \int_D \Phi_t \diff \m 
 = \int_D \Phi\biggl(\frac{\bar\mssd_B^2}{2}\biggr) \diff \m
\end{equation}
for every measurable set $D \subset \M$ with $\m(D)>0$, 
namely $\Phi(\bar\d_B^2/2)$ is the weak $L^2$-limit of $\Phi_t$.
For $\sigma>0$ and $\tau>t>0$, we decompose as 
\[ \int_D \Phi_t \diff \mssm
 =(\Phi_t, h^{\tau,\sigma}_{\tau-t})_{L^2}+(\Phi_t, \1_D -h^{\tau,\sigma}_{\tau-t})_{L^2} \fstop \]
Taking the limit as $t \to 0$, we find from Lemma~\ref{l:215}, Proposition~\ref{p:LPE} and $0 \le \Phi \le KL$ that
\begin{equation}\label{eq:h_t}
\Biggl| \lim_{t \downarrow 0} \int_D \Phi_t \diff \mssm
 - \biggl( \Phi\biggl( \frac{\bar{\d}_B^2}{2} \biggr), h^{\tau,\sigma}_{\tau} \biggr)_{L^2} \Biggr|
 \le KL \| \1_D -h^{\tau,\sigma}_{\tau} \|_{L^1} \fstop
\end{equation}

We next consider the limit as $\tau \to 0$.
Put $w_t:=\T_t \1_D$ and observe that, for $t \in (0,\tau)$,
\begin{align*}
&\frac{\diff}{\diff t} \Bigl[ \| w_{\sigma+t} -h^{\tau,\sigma}_t \|_{L^2}^2 \Bigr] \\
&= \int_\M ( w_{\sigma+t} -h^{\tau,\sigma}_t) 
 \bigl( \Delta w_{\sigma +t} -\Delta\!^{\nabla u_{\tau-t}} h^{\tau,\sigma}_t \bigr) \diff\m \\
&= -\int_\M \dif(w_{\sigma+t} -h^{\tau,\sigma}_t)
 \bigl( \nabla w_{\sigma +t} -\nabla^{\nabla u_{\tau -t}} h^{\tau,\sigma}_t \bigr) \diff\m \\
&= -2\E(w_{\sigma +t}) -2\E^{\nabla u_{\tau -t}}(h^{\tau,\sigma}_t)
 +\int_\M \bigl\{ \dif h^{\tau,\sigma}_t (\nabla w_{\sigma +t})
 +\dif w_{\sigma +t} \bigl( \nabla^{\nabla u_{\tau -t}}h^{\tau,\sigma}_t \bigr) \bigr\} \diff\m \\
&\le -2\E(w_{\sigma +t}) -2\E^{\nabla u_{\tau -t}}(h^{\tau,\sigma}_t)
 +\int_\M F^*(\dif w_{\sigma +t}) \bigl\{ F^*(\dif h^{\tau,\sigma}_t) 
 +F\bigl( \nabla^{\nabla u_{\tau -t}}h^{\tau,\sigma}_t \bigr) \bigr\} \diff\m \comma 
\end{align*}
where we defined (recall \eqref{eq:dual} for $g^*_{\dif u}$) 
\[ \E^{\nabla u}(h) :=\frac{1}{2} \int_\M g^*_{\dif u} (\dif h,\dif h) \diff\m
 =\frac{1}{2} \int_\M \dif h(\nabla^{\nabla u} h) \diff\m \fstop \]
Now, it follows from \eqref{eq:CSdual} that 
\[ F^*(\dif h^{\tau,\sigma}_t)^2
 \le \mathsf{S}_F \cdot g^*_{\dif u_{\tau -t}}(\dif h^{\tau,\sigma}_t, \dif h^{\tau,\sigma}_t) \comma \]
and similarly, by \eqref{eq:CS}, 
\[ F\bigl( \nabla^{\nabla u_{\tau -t}}h^{\tau,\sigma}_t \bigr)^2
 \le \mathsf{C}_F \cdot g_{\nabla u_{\tau -t}}
 \bigl( \nabla^{\nabla u_{\tau -t}}h^{\tau,\sigma}_t,\nabla^{\nabla u_{\tau -t}}h^{\tau,\sigma}_t \bigr)
 = \mathsf{C}_F \cdot g^*_{\dif u_{\tau -t}}(\dif h^{\tau,\sigma}_t, \dif h^{\tau,\sigma}_t) \fstop \]
Therefore, 
\begin{align*}
&\frac{\diff}{\diff t} \Bigl[ \| w_{\sigma+t} -h^{\tau,\sigma}_t \|_{L^2}^2 \Bigr] \\
&\le -2\E(w_{\sigma +t}) -2 \E^{\nabla u_{\tau -t}}(h^{\tau,\sigma}_t) \\
&\quad +\int_\M \bigl( \sqrt{\mathsf{S}_F} +\sqrt{\mathsf{C}_F} \bigr) F^*(\dif w_{\sigma +t})
 \cdot \sqrt{g^*_{\dif u_{\tau -t}}(\dif h^{\tau,\sigma}_t, \dif h^{\tau,\sigma}_t)} \diff\m \\
&\le -2\E(w_{\sigma +t}) -2 \E^{\nabla u_{\tau -t}}(h^{\tau,\sigma}_t) \\
&\quad +\int_\M \biggl\{ \frac{1}{4} \bigl( \sqrt{\mathsf{S}_F} +\sqrt{\mathsf{C}_F} \bigr)^2 F^*(\dif w_{\sigma +t})^2
 +g^*_{\dif u_{\tau -t}}(\dif h^{\tau,\sigma}_t, \dif h^{\tau,\sigma}_t) \biggr\} \diff\m \\
&= \biggl( \frac{(\sqrt{\mathsf{S}_F}+\sqrt{\mathsf{C}_F})^2}{2} -2 \biggr) \E(w_{\sigma +t}) \fstop
\end{align*}
Since $h^{\tau,\sigma}_0=w_{\sigma}$, integrating the above inequality in $t \in (0,\tau)$ yields
\[ \| w_{\sigma+\tau} -h^{\tau,\sigma}_{\tau} \|_{L^2}^2
 \le \biggl( \frac{(\sqrt{\mathsf{S}_F}+\sqrt{\mathsf{C}_F})^2}{2} -2 \biggr)
 \int_0^\tau \E(w_{\sigma +t}) \diff t \fstop \]
Plugging
\[ \frac{\diff}{\diff t} \Bigl[ \|w_{\sigma +t}\|_{L^2}^2 \Bigr]
 = \int_\M w_{\sigma +t} \Delta w_{\sigma +t} \diff\m
 = -2\E(w_{\sigma +t}) \]
into the RHS, we obtain 
\[ \| w_{\sigma+\tau} -h^{\tau,\sigma}_{\tau} \|_{L^2}^2
 \le \biggl( \frac{(\sqrt{\mathsf{S}_F}+\sqrt{\mathsf{C}_F})^2}{4} -1 \biggr)
 \bigl( \|w_{\sigma}\|_{L^2}^2 -\|w_{\sigma +\tau}\|_{L^2}^2 \bigr) \fstop \]
Hence, $h^{\tau,\sigma}_\tau \to w_{\sigma}$ in $L^2$ as $\tau \to 0$, and then \eqref{eq:h_t} shows 
\[ \Biggl| \lim_{t \downarrow 0} \int_D \Phi_t \diff \mssm
 - \biggl( \Phi\biggl( \frac{\bar{\d}_B^2}{2} \biggr), w_{\sigma} \biggr)_{L^2} \Biggr|
 \le KL \| \1_D -w_{\sigma} \|_{L^1} \fstop \]
Finally, as $\sigma \to 0$, we have $\|\1_D -w_{\sigma}\|_{L^1} \to 0$ 
since $w_{\sigma} \to \1_D$ in $L^2$ and $\m(\M)<\infty$. 
This completes the proof of \eqref{eq:HR2.22} and hence the lower estimate \eqref{LB}. 

\subsection{Further problems}\label{ssc:outro}%%%%%
%%%%%

We conclude with some further problems. 

First of all, in Theorem~\ref{t:m1}, we used the assumption $\m(\M)<\infty$ 
only for deducing the $L^1$-convergence from the $L^2$-convergence. 
We expect that a finer analysis of (non-linear and linearised) heat semigroups could remove it. 
Such an analysis will be helpful also for the further study of geometric analysis on noncompact Finsler manifolds, 
where some results are known only under seemingly artificial assumptions; 
we refer to \cite{Oht22} for gradient estimates and an isoperimetric inequality, 
and to \cite{Mai22} for a rigidity problem of the spectral gap. 

In the non-smooth setting in Theorem~\ref{t:m2}, 
the lower bound estimate is an intriguing open problem. 
The main issue is whether we can  avoid using a linearised heat semigroup in Lemma~\ref{l:215}.

\bibliographystyle{abbrv}
\bibliography{MasterBib.bib}
\end{document}